\newcommand{\M}{{\mathbb M}}
\newcommand{\R}{{\mathbb R}}
\newcommand{\N}{{\mathbb N}}
\newcommand{\I}{{\mathbb I}}
\renewcommand{\~}{\widetilde}
\newcommand{\hatP}{{\widehat{\cal P}}}
\newcommand{\hatF}{{\widehat{\cal F}}}
\def\B{{\cal B}}
\def\P{{\cal P}}
\def\H{{\cal H}}
\def\D{{\cal D}}
\def\F{{\cal F}}
\def\V{{\cal V}}
\def\A{{\cal A}}
\def\e{{\cal E}}
\def\e{\mbox{${\cal E }$}}
\def\fk{\mbox{${(F_k)_{k \in \N}}$}}
\newtheorem{defn}{Definition}[section]
\newtheorem{theo}[defn]{Theorem}
\newtheorem{prop}[defn]{Proposition}
\newtheorem{rem}[defn]{Remark}
\newenvironment{proof}{{\bf Proof }}{{\vskip 0.1cm \hfill$\Box$}}
\begin{document}
\centerline{\Huge \sf Weak existence of the squared Bessel and }
\vspace{0.5cm}
\centerline{\Huge \sf CIR process with skew reflection on a }
\vspace{0.5cm}
\centerline{\Huge \sf deterministic time dependent curve}
\vspace{1cm}
\centerline{\large \sf Gerald  TRUTNAU}
\vspace{1cm}
\hspace{-0.65cm}Department of Mathematical Sciences,
Seoul National University,
San56-1 Shinrim-dong Kwanak-gu,
Seoul 151-747, South Korea (e-mail: trutnau@snu.ac.kr)\\ \\
{\bf Summary}: Let $\sigma>0,\delta\ge 1, b\ge 0$, $0<p<1$. Let $\lambda$ be a continuous and positive 
function in $H^{1,2}_{loc}(\R^+)$. 
Using the technique of moving domains (see \cite{Tr4}), and classical direct 
stochastic calculus, we construct for positive initial conditions a pair of continuous and positive semimartingales $(R,\sqrt{R})$ with
$$
dR_t=\sigma\sqrt{R_t}dW_t +\frac{\sigma^2}{4}(\delta-bR_t)dt + (2p-1)d\ell^0_t(R-\lambda^2),
$$ 
and 
\begin{eqnarray*}
d\sqrt{R}_t&=&\frac{\sigma}{2}dW_t+
\frac{\sigma^2}{8}\left (\frac{\delta-1}{\sqrt{R}_t}-b\sqrt{R}_t\right )dt
+(2p-1)d\ell^0_t(\sqrt{R}-\lambda)\\ 
&&\hspace*{+4cm}+\frac{\mathbbm{I}_{\{\delta=1\}}}{2}\ell_t^{0+}(\sqrt{R}),
\end{eqnarray*} 
where the symmetric local times $\ell^0(R-\lambda^2), \ell^0(\sqrt{R}-\lambda)$, 
of the respective semimartingales $R-\lambda^2, \sqrt{R}-\lambda$ are related through the formula
$$
2\sqrt{R}d\ell^0(\sqrt{R}-\lambda)=d\ell^0(R-\lambda^2).
$$ 
Well-known special cases are the (squared) 
Bessel processes (choose $\sigma=2$, $b=0$, and 
$\lambda^2\equiv 0$, or equivalently $p=\frac12$), and the Cox-Ingersoll-Ross process 
(i.e. $R$, with $\lambda^2\equiv 0$, or equivalently $p=\frac12$).
The case $0<\delta<1$ can also be handled, but is different. If $|p|>1$, then there is no solution.\\ \\
{\bf 2000 Mathematics Subject Classification}: Primary: 60H10,  60J60, 60J55, 35K20, 91B28; 
Secondary: 31C25, 31C15. \\  \\ 
{\bf Key words}: Primary: Stochastic ordinary differential equations, Diffusion processes, 
Local time and additive functionals. 
Boundary value problems for second-order, parabolic equations;  Finance, portfolios, investment;
Secondary: Dirichlet spaces, Potentials and capacities. \\  
\section{Introduction}
Consider a continuous, and positive function $\lambda:\R^+\to \R^+$, $\lambda\in H^{1,2}_{loc}(\R^+)$.  
For parameters $\sigma>0,\delta, b\ge 0$, $p\in ]0,1[$, and positive initial conditions, we construct the Cox-Ingersoll-Ross process with skew reflection on $\lambda^2$ (here $\lambda^2(t)=\lambda(t)\cdot\lambda(t)$), 
i.e. a weak solution to
\begin{eqnarray}\label{skewbesq}
dR_t=\sigma\sqrt{|R_t|}dW_t+\frac{\sigma^2}{4}(\delta-bR_t)dt+ (2p-1)d\ell^0_t(R-\lambda^2),
\end{eqnarray} 
where $\ell_t^0(R-\lambda^2)$ is the symmetric local time 
at zero of the continuous  semimartingale $R-\lambda^2$. The equation is in particular complicated because of the singular diffusion coefficient in combination with a parabolic local time. 
For example, the first naive idea to put $S_t=R_t-\lambda^2(t)$ in order to simplify (\ref{skewbesq}) doesn't improve at all the situation. One ends up with 
$$
dS_t=\sigma\sqrt{|S_t+\lambda^2(t)|}dW_t+\frac{\sigma^2}{4}(\delta-bS_t-b\lambda^2(t)+\frac{8}{\sigma^2}\lambda(t)\lambda'(t))dt+ (2p-1)d\ell^0_t(S),
$$
i.e. an equation with degenerate time dependent diffusion coefficient and still remaining reflection term. For equations of this type general existence and uniqueness results are completely unknown. Playing around with equation 
(\ref{skewbesq}) one quickly observes that the sole employment of direct stochastic calculus doesn't seem to be helpful for the construction of such highly singular processes. 
In fact, constructing a solution to (\ref{skewbesq}) means to construct a solution to the Cauchy problem related to the generator of the CIR process with skew boundary conditions on $\lambda^2$. This is in view of the coefficients a complicated task. Our approach here is to provide a general method for the construction of these type of diffusions applying a recently developed technique of moving domains (see \cite{Tr4}) 
in combination with classical direct stochastic calculus. Let us explain this method in detail. We keep $\delta\ge 1$ in order to simplify (for the case $\delta\in(0,1)$ see Remark \ref{extension}).
First one can readily check that (\ref{skewbesq}) is the square of a solution $Y,Y_0\ge 0$ to
\begin{eqnarray}\label{skewbes}
dY_t=\frac{\sigma}{2}dW_t+\frac{\sigma^2}{8}\left (\frac{\delta-1}{Y_t}-bY_t\right )dt+
(2p-1)\mathbbm{I}_{\{\lambda>0\}}d\ell^0_t(Y-\lambda)+
\frac{\mathbbm{I}_{\{\delta=1\}}}{2}d\ell_t^{0+}(Y),
\end{eqnarray} 
where $\ell_t^{0+}$ the upper (right-continuous) local time. In fact, just apply It\^o's formula to see that $R_t:=Y_t^2$, $R_0\ge 0$, solves  
$$
dR_t=\sigma\sqrt{R_t}dW_t +\frac{\sigma^2}{4}(\delta-bR_t)dt + (2p-1)2\sqrt{R_t}d\ell^0_t(\sqrt{R}-\lambda),
$$ 
and then notice that\\
\begin{eqnarray}\label{relloc}
2\sqrt{R_t}d\ell^0_t(\sqrt{R}-\lambda)=d\ell^0_t(R-\lambda^2).
\end{eqnarray} 
The relation (\ref{relloc}) can be shown by probabilistic means 
using a product formula for local times (see \cite{Yan}, and \cite{ouk}). In fact, we have  
$R-\lambda^2=(\sqrt{R}+\lambda)(\sqrt{R}-\lambda$), but see also Remark \ref{addcalc}(i) for an independent analytic proof. 
So far we explained why it is enough to construct (\ref{skewbes}) in order to get (\ref{skewbesq}). The advantage of (\ref{skewbes}) in comparison with (\ref{skewbesq}) is that it is 
much better suited to be obtained by a change of measure since there is no coefficient in the martingale part. \\ 
The next step is to decompose $\lambda$ into $\lambda=\beta+\gamma$, where 
$\beta,\gamma\in H^{1,2}_{loc}(\R^+)$ are continuous representatives, with
$\beta$ decreasing,  and $\gamma$ increasing. Consider the following moving domain
$$
E:=\{(t,x)\in \R^+\times\R|x\ge -\gamma(t)\}.
$$
Since $-\gamma(t)$ decreases, $E_t:=\{x\in \R| (t,x)\in E\}=[-\gamma(t),\infty)$ is 
increasing in $t$. As reference measure on $E$ consider 
$$
m(dxdt):=\rho(t,x)dxdt,
$$ 
where 
$$
\rho(t,x):=\left ((1-p) \mathbbm{I}_{[-\gamma(t)),\beta(t))}(x)+
p \mathbbm{I}_{[\beta(t),\infty)}(x)\right )
|x+\gamma(t)|^{\delta-1}e^{-\frac{bx^2}{2}}
$$
is assumed to be increasing in $t$, that is 
\begin{eqnarray*}
\rho(s,x)\le \rho(t,x) \ \ \ \forall \ 0\le s\le t,\  x\in E_s.
\end{eqnarray*}
Due to the monotonicity assumptions on $\rho$ (and $E$) we are in the theoretical framework of \cite{Tr4}. 
Note that the monotonicity  assumptions are in fact restricting as these do not allow all imaginable choices of parameters. 
We will comment on this below in the introduction. 
Applying the theory of time-dependent Dirichlet forms on monotonely moving domains of \cite{Tr4} we deduce that the 
process associated to the time-dependent Dirichlet form (\ref{form}) solves  
\begin{eqnarray}\label{transform}
X_t & = & X_0+\frac{\sigma}{2}B_t+\frac{\sigma^2}{8}\int_0^t\frac{\delta-1}{X_s+\gamma(s)}-bX_s ds
\nonumber\\ 
&&+(2p-1)\int_0^t\I_{\{\beta(s)>-\gamma(s)\}}d\ell_s^0(X-\beta) 
+\frac{\I_{\{\delta= 1\}}}{2}\ell_t^{0+}(X+\gamma),\\ \nonumber 
\end{eqnarray}
(that is (\ref{rootCIR})). 
The construction is carried out in section \ref{constr} with advanced and heavy machinery, i.e. the {\it Theory of Generalized Dirichlet Forms}. 
We couldn't see how else to do it. In fact there is a lot of work done with direct stochastic calculus in the case of \lq\lq nice\rq\rq 
coefficients and non-parabolic local time 
(see e.g. \cite{lg} as showcase), or also in the Brownian motion case (no coefficients) with parabolic local time  (see e.g. \cite{bcs}, or \cite{Wein}). 
So, the type of equation (\ref{skewbesq}) seems to be a real novelty. Now let us come back to the explanation of its construction. 
In section \ref{GDF1} we first derive the existence 
of a diffusion associated to the time-dependent Dirichlet form (\ref{form}) (see Theorem \ref{Hunt}). 
In order to obtain finally (\ref{transform}) we first identify (\ref{id4}) as the process associated to the time-dependent Dirichlet form (\ref{form}). 
Since in general Dirichlet form theory the drift is 
associated to a signed smooth measure via the Revuz correspondence (\ref{I3}), we can see Markovian local times, i.e. singular increasing processes 
that are associated to smooth measures and their potentials, 
in the equation (\ref{id4}) (cf. explanation at the beginning of section \ref{GDF3}). That these Markovian local times exist is shown in Proposition \ref{smoo}. 
In particular the signed smooth measure corresponding to the 
Markovian local times in (\ref{id4}) is given in (\ref{smoo1}) with $F$ equal to the identity (see also (\ref{id1}), (\ref{id2})). The Markovian 
local times are indeed semimartingale local times. This is deduced in section \ref{GDF3} by comparing Fukushima's decomposition for generalized Dirichlet forms (\ref{I5}) with 
Tanaka's formula (\ref{tanaka}) (see explanation right after Remark \ref{constloc}). The final formulas in order to obtain (\ref{transform}) are then given in (\ref{loc2}), and (\ref{loc3}). \\
Once a solution to (\ref{transform}) is constructed we just define 
$$
W_t:=B_t+\frac{1}{4\sigma }\int_0^t 8\gamma'(s)+\sigma^2 b\gamma(s)ds,
$$ 
and then under a change of measure with density
$$
e^{-\frac{1}{4\sigma }\int_0^t 8\gamma'(s)+\sigma^2 b\gamma(s)dB_s-
\frac{1}{32\sigma^2}\int_0^t |8\gamma'(s)+\sigma^2 b\gamma(s)|^2 ds}
$$
we see that $Y_t:=X_t+\gamma(t)$ is a solution to (\ref{skewbes}), 
hence $R:=Y^2$ solves (\ref{skewbesq}) and $R$ is constructed. \\ \\
Let us now comment on the monotonicity assumptions on $\rho$ (and $E$). The time dependent Dirichlet form (\ref{form}) consists of two parts. The first is a symmetric bilinear form and the second is a 
perturbation with the time derivative. The time derivative is generated by the semigroup $(U_t)_{t\ge 0}$ of (\ref{SG}). $U_t$ pushes the support of a function $F\in C^1_0(E)$ by $t$ to the left. 
If we want to guarantee that  $(U_t)_{t\ge 0}$ becomes a contraction semigroup on $L^2(E,\rho dxdt)$ then this is only possible if the support of $U_tF$ is 
smaller than the one of $F$ (i.e. $E_t$ increases in $t$), and if $\rho(s-t,x)$ is smaller than $\rho(s,x)$ ($\rho$ increases in $t$). The semigroups corresponding to the 
symmetric part of the time dependent Dirichlet form (\ref{form}) and to the time derivative are nearly \lq\lq orthogonal\rq\rq. Therefore in order to guarantee that the generator corresponding 
to the whole form  (\ref{form}) satisfies the positive maximum principle (cf. e.g. \cite[\mbox{chapter 4}]{ek}) we need $(U_t)_{t\ge 0}$ to be a contraction. 
Various considerations lead to the belief that this might be difficult if not impossible to improve. In any case this is an intriguing question. 
For example it is not possible to add some killing with a constant $c$ to make $(e^{-ct}U_t)_{t\ge 0}$ a contraction (and then to remove afterwards the killing). 
If  $p\in (\frac12,1)$, so that $1-p<p$, then $\rho(\cdot,x)$ always increases, or if 
$\beta=const$, then $\rho(\cdot,x)$ increases for any $p\in (0,1)$. Therefore our construction takes place for arbitrary 
$\lambda\in H^{1,2}_{loc}(\R^+)$, if $2p-1>0$, or for increasing $\lambda$, if $2p-1<0$ (cf. below (\ref{mono}), and Remark \ref{constloc}, \ref{addcalc}(ii) for more general $\lambda$). 
It is remarkable that pathwise uniqueness in \cite{Tr6} could be deduced for $2p-1<0$ if $(\lambda^2)'\ge \frac{\sigma^2}{4}(\delta-b\lambda^2)$. Thus for $p<\frac{1}{2}$ and some strictly decreasing 
$\lambda$ but which is still bounded below by the mean-reverting level $\frac{\delta}{b}$. We also find remarkable that we were not able to construct a solution in the extreme cases $p=0$ and $p=1$.\\ 
In Remark \ref{whysym}(i) we show that if $|p|>1$, then there is no solution to (\ref{skewbesq}). 
The construction of $R,\sqrt{R}$ is in the sense of equivalence of additive functionals of Markov processes (see Remark \ref{quasi}). 
The case $0\le \delta<1$ can also be handled, but is different, see Remark \ref{extension}. In \cite{Tr6} it is shown that a solution to (\ref{skewbesq}) always 
stays positive when started with positive initial condition. One can hence discard the absolute value under the square root in (\ref{skewbesq}) 
as in the classical situation where $\lambda^2\equiv 0$. Nota bene, that in case $\lambda^2\equiv 0$, the local time $\ell(R)$ vanishes as a direct consequence of the occupation time formula.
Let us also remark that the parallel work \cite{Tr6} is completely different from this work as well as from the subject as from the used techniques 
since there exclusively probabilistic arguments are employed.\\
We really think that our approach is a novelty, and that equations (\ref{skewbesq}), and (\ref{skewbes}) are truly worth to be studied. 
The parallel work \cite{Tr6} which deals with pathwise uniqueness of (\ref{skewbesq}) shows that stochastic calculus  is possible, even with such highly singular equations. 
It encourages to go further. For instance it could be challenging to further investigate ergodic behavior of (\ref{skewbesq}) under the assumption $\lim_{t\to\infty}\lambda^2(t)=const.$ 
Or, is it possible to write down the distribution of R and/or to calculate explicitly the relevant corresponding quantities (mean, variance, etc.)?
Finally we think that equation (\ref{skewbesq}) can also have suitable interpretation in terms of multi-factor term structure models 
(i.e. models corresponding to a system of SDEs of basic univariate models) if we 
manage to provide enough analytic tractability and a sufficiently general framework to express no-arbitrage in the corresponding model. \\ \\
\section{Construction of the skew reflected CIR process}\label{constr}\medskip
\subsection{Construction of a solution to (\ref{transform})}\label{GDF} \medskip
\subsubsection{The Generalized Dirichlet Form associated to a solution of (\ref{transform})}\label{GDF1} \medskip
Throughout this article $\mathbbm{I}_A$ will denote the indicator function of a set $A$. 
Let $E:=\R^+\times\R^+$, where $\R^+:=\{x\in \R|\,x\ge 0\}$. Let 
$C_0^{1}(\R^+)=\{f:\R^+\to \R|\,\exists u\in C_0^{1}(\R)\mbox{ with } u\mathbbm{I}_{\R^+}=f\}$, 
and $C_0^{1}(\R)$ denotes the continuously differentiable functions with compact support in $\R$.
Let $H^{1,2}(\R^+)$ be the Sobolev space of order one in $L^2(\R^+)$, that is the completion of 
$C_0^{1}(\R^+)$ w.r.t. $|\phi|_{H^{1,2}(\R)}=
(\int_{\R^+}|\partial_u \phi|^2+|\phi|^2 du)^{\frac12}$. When considering an element of $H^{1,2}(\R^+)$, 
we always assume that is it continuous by choosing such a version. 
Later we will use the notions $\partial_u,du$ for the space variable 
(notation $u=x$) as well as for the time variable (notation $u=t$, or $u=s$). For the space-time variable we use $y$, e.g. $y=(s,x)$, $y=(t,x)$.
Let $H^{1,2}_{loc}(\R^+)$ denote the space of all continuous $\phi:\R^+\to\R$ such that 
$\phi f\in H^{1,2}(\R^+)$  for any $f\in C_0^{1}(\R^+)$. \\ \\
If $\lambda\in H^{1,2}_{loc}(\R^+)$, then it has a uniquely determined continuous version w.r.t. 
the Lebesgue measure $dt$. We will always assume that $\lambda$ is continuous. Furthermore we assume that 
 $\lambda$ is positive, i.e. $\lambda\ge 0$. 
In particular $\lambda=\beta+\gamma$, where 
$\beta,\gamma\in H^{1,2}_{loc}(\R^+)$, and 
$\beta$ is decreasing,  $\gamma$ is increasing. 
Indeed, since $\partial_t \lambda \in L^2_{loc}(\R^+)$ we may consider its positive part 
$(\partial_t \lambda)^+$, and  its negative part 
$(\partial_t \lambda)^-$, and fix from now on  
\begin{eqnarray}\label{decomp}
\beta(t):=-\int_0^t (\partial_t \lambda)^-(s)ds+\lambda(0), \ \ \ \ \ \ \gamma(t):=
\int_0^t (\partial_t \lambda)^+(s)ds.
\end{eqnarray}
$\lambda=\beta+\gamma\ge 0$ implies $\beta\ge -\gamma$. Consider the following moving domain
$$
E:=\{(t,x)\in \R^+\times\R|x\ge -\gamma(t)\}.
$$
Observe, that its $t$-section $E_t=\{x\in \R| (t,x)\in E\}=[-\gamma(t),\infty)$ is 
increasing in $t$ since $-\gamma(t)$ decreases in $t$. In particular 
$E=\cup_{t\ge 0}\{t\}\times E_t$.\\
Let $\delta\ge 1$, $b\in \R^+$ (for the case $\delta\in(0,1)$ see Remark \ref{extension}). As reference measure on $E$ we take 
$$
m(dy)=m(dxdt):=\rho(t,x)dxdt,
$$ 
where 
$$
\rho(t,x):=\left ((1-p) \mathbbm{I}_{[-\gamma(t)),\beta(t))}(x)+
p \mathbbm{I}_{[\beta(t),\infty)}(x)\right )
|x+\gamma(t)|^{\delta-1}e^{-\frac{bx^2}{2}}
$$
is assumed to be increasing in $t$, that is 
\begin{eqnarray}\label{mono}
\rho(s,x)\le \rho(t,x) \ \ \ \forall \ 0\le s\le t,\  x\in E_s.
\end{eqnarray}
For instance, if  $p\in (\frac12,1)$, so that $1-p<p$, then $\rho(\cdot,x)$ always increases, or if 
$\beta=const$, then $\rho(\cdot,x)$ increases for any $p\in (0,1)$.\\ 
Due to the monotonicity properties of $\rho$ (and $E$) we are in the framework of \cite{Tr4}. 
More precisely, there is a time-dependent 
generalized Dirichlet form $\e$ with domain $\F\times \V\cup \V\times\hat\F$ on 
$\H:=L^2(E,m)$ 
which we determine right below. 
For $q\ge 1$ let 
$$
C_0^{q}(E):=\{f:E\to \R|\,
\exists u\in C_0^{q}(\R^2)\mbox{ with } u\mathbbm{I}_{E}=f\},
$$ 
and $C_0^{q}(\R^2)$ denotes the $q$-times continuously differentiable functions with compact support 
in $\R^2$.
Let $0<\sigma\in \R$
\begin{eqnarray}\label{DF}
\A(F,G):=\frac{\sigma^2}{8}\int_{0}^{\infty}\int_{-\gamma(s)}^{\infty}\partial_x F(s,x) \,\partial_x G(s,x) \rho(s,x) dxds;\ \ \ F,G\in C_0^{1}(E), 
\end{eqnarray}
with closure $(\A,\V)$ in $\H$. The closability easily follows since $\rho$ satisfies a Hamza type condition 
(see \cite[\mbox{Lemma 1.1}]{Tr4}). Let $\A_{\alpha}(F,G):=\A(F,F)+\alpha(F,F)$, $\alpha>0$, 
where $(\cdot,\cdot)$ is the inner product in $\H$. 
For $K\subset E$ compact, the capacity related to $\A$ is defined by
\begin{eqnarray}\label{capa}
\mbox{Cap}^{\A}(K)=\inf\{\A_1(F,F);F\in C^{1}_{0,K}(E)\}, 
\end{eqnarray}
where $C^{1}_{0,K}(E)=\{F\in {C^1_0(E)}| F(s,x)\ge 1, \forall (s,x)\in K\}$. For general 
$A\subset E$ it is extended by inner regularity. Define  
\begin{eqnarray}\label{SG}
U_t F(s,x):=F(s+t,x);\ \ \ F \in C_0^{1}(E).
\end{eqnarray}
It then follows from results in \cite{Tr4} that $(U_t)_{t\ge 0}$ can be extended to a 
$C_0$-semigroup of contractions on $\H$ which can be restricted to a 
$C_0$-semigroup on $\V$. For the corresponding generator 
 $(\partial_t, D(\partial_t,\H)$ on $\H$ it follows that 
$$
\partial_t:D(\partial_t,\H\cap \V)\to \V'
$$
is closable as operator from $\V$ to its dual $\V'$ (see \cite[\mbox{I.Lemma 2.3.}]{St1}). 
Let $(\partial_t,\F)$ be the closure.   
$\F$ is a real Hilbert space with norm  
$$
|F|_{\F}:=\sqrt{|F|_{\V}^2+|\partial_t F|_{\V'}^2}.
$$ 
The adjoint semigroup $(\widehat{U}_t)_{t\ge 0}$ of $(U_t)_{t\ge 0}$ in $\H$ 
can be extended to a $C_0$-semigroup on $\V'$. The corresponding generator 
$(\hat\Lambda,D(\hat\Lambda,\V'))$ is the dual operator of $(\partial_t,D(\partial_t,\V))$. 
$\widehat{\F}:=D(\hat\Lambda,\V')\cap\V$ is a real Hilbert space with norm 
$$
|F|_{\widehat{\F}}:=\sqrt{|F|_{\V}^2+|\hat\Lambda F|_{\V'}^2}.
$$ 
Let $\langle \cdot ,\cdot\rangle$ be the dualization between $\V'$ and $\V$. The 
{\it time-dependent generalized Dirichlet form} is now given through

\[ {\e}(F,G):= \left\{ \begin{array}{r@{\quad\quad}l}
 \A(F,G)- \langle \partial_t F,G\rangle & \mbox{ for}\ F \in{\F},\ G\in {\V} \\ 
  \A(F,G) - \langle \hat\Lambda G,F\rangle  & \mbox{ for}\ G\in{\widehat{\F}},\ F\in
            {\V.} \end{array} \right. \] \\ \\
Note that $\langle \cdot ,\cdot \rangle$ when restricted to $\H\times\V$ coincides with the inner product 
$(\cdot,\cdot)$ in $\H$.
In particular when $F\in C_0^{1}(E)$, $G\in \V$, then 
\begin{eqnarray}\label{form}
\e(F,G) & = & \frac{\sigma^2}{8} \int_{0}^{\infty}\int_{-\gamma(s)}^{\infty} 
\partial_x F(s,x) \,\partial_x G(s,x) \rho(s,x) dxds\nonumber  \\
& & \hspace*{+2cm}-\int_{0}^{\infty}\int_{-\gamma(s)}^{\infty}\partial_t F (s,x) G(s,x)\rho(s,x)dxds. 
\end{eqnarray}
For all corresponding objects to $\e$ which might not rigorously be defined here 
we refer to \cite{Tr4}. We also point out that the monotonicity assumption on 
$E_t$ as well as on the density $\rho$ in time is crucial for the construction of $\e$.\\ 
By \cite[\mbox{Lemma 1.6, Lemma 1.7}]{Tr4} the resolvent $(G_\alpha)_{\alpha>0}$ and the coresolvent  
$(\widehat G_\alpha)_{\alpha>0}$ associated with $\e$ are sub-Markovian and $C^1_0(E)\subset \F$ dense.
Let $\e_\alpha (F,G):=\e(F,G) + \alpha (F,G)$ for $\alpha >0$. Then 
$$
\e_\alpha (G_\alpha F,G)=(F,G)_{\H}=\e_\alpha (F,\widehat G_\alpha G)\ \ F,G\in \V.
$$
\begin{prop}\label{cons}
$(G_\alpha)_{\alpha>0}$ is Markovian, i.e. $G_{1} \mathbbm{I}_E = \mathbbm{I}_E$ 
$m$-a.e. 
\end{prop}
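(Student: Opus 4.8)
The plan is to prove the two inequalities $G_1\mathbbm{I}_E\le\mathbbm{I}_E$ and $G_1\mathbbm{I}_E\ge\mathbbm{I}_E$ separately. The first is immediate from what is already available: since $(G_\alpha)_{\alpha>0}$ is sub-Markovian, $0\le G_1 f\le\mathbbm{I}_E$ whenever $0\le f\le\mathbbm{I}_E$. Choosing $f_n\in\H$ with $0\le f_n\uparrow\mathbbm{I}_E$ (for instance $f_n=\mathbbm{I}_{E\cap([0,n]\times[-n,n])}$, which lie in $\H$ because $m$ is Radon) and using that $G_1$ is positivity preserving, $G_1 f_n\uparrow G_1\mathbbm{I}_E\le\mathbbm{I}_E$; here and below $G_1\mathbbm{I}_E$ denotes this monotone limit. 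It remains to establish the reverse inequality, which I reduce to a mass-preservation property of the coresolvent.

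First I pass to the dual side. Using that $G_1$ and $\widehat{G}_1$ are adjoint in $\H$, for $g\in C_0^1(E)$ with $g\ge 0$ one has
\begin{equation*}
(\mathbbm{I}_E-G_1\mathbbm{I}_E,\,g)=(\mathbbm{I}_E,\,g)-\lim_n(G_1 f_n,\,g)=(\mathbbm{I}_E,\,g)-(\mathbbm{I}_E,\,\widehat{G}_1 g)=\int_E\big(g-\widehat{G}_1 g\big)\,dm .
\end{equation*}
The function $v:=\widehat{G}_1 g$ is nonnegative, lies in $\hatF\subset\V$, and is $m$-integrable, since
\begin{equation*}
\int_E v\,dm=(\widehat{G}_1 g,\,\mathbbm{I}_E)=(g,\,G_1\mathbbm{I}_E)\le(g,\,\mathbbm{I}_E)=\int_E g\,dm<\infty
\end{equation*}
by sub-Markovianity of $G_1$. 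As $\mathbbm{I}_E-G_1\mathbbm{I}_E\ge 0$ and $g\ge 0$ ranges over $C_0^1(E)$, it suffices to prove $\int_E\widehat{G}_1 g\,dm=\int_E g\,dm$. This is the analytic counterpart of $U_t\mathbbm{I}_E=\mathbbm{I}_E$, an identity which holds precisely because $E_t$ increases in $t$; thus the monotonicity hypothesis on $E$ enters here in an essential way.

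To prove this last identity I test the defining relation $\e_1(F,v)=(F,g)$, $F\in\V$, against a cut-off sequence $\phi_n\in C_0^1(E)$ with $0\le\phi_n\le 1$ and $\phi_n\uparrow\mathbbm{I}_E$. Since $(\phi_n,g)\to\int_E g\,dm$ and $(\phi_n,v)\to\int_E v\,dm$ by monotone convergence (using $g,v\in L^1(m)$), and since $\phi_n\in C_0^1(E)\subset\F$ gives
\begin{equation*}
\e_1(\phi_n,v)=\A(\phi_n,v)-\langle\partial_t\phi_n,v\rangle+(\phi_n,v),
\end{equation*}
the claim reduces to showing that the two error terms $\A(\phi_n,v)$ and $\langle\partial_t\phi_n,v\rangle$ tend to $0$. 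For the temporal term I arrange $\phi_n$ so that $|\partial_t\phi_n|\le C/n$ with $\partial_t\phi_n$ supported in $\{n\le s\le 2n\}$; then
\begin{equation*}
|\langle\partial_t\phi_n,v\rangle|\le\frac{C}{n}\int_{\{n\le s\le 2n\}}v\,dm\longrightarrow 0,
\end{equation*}
the tail integral vanishing because $v\in L^1(E,m)$. This is exactly the step where the $L^1$-bound on $v=\widehat{G}_1 g$, itself a consequence of sub-Markovianity, is exploited.

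The main obstacle is the spatial term $\A(\phi_n,v)=\frac{\sigma^2}{8}\int\partial_x\phi_n\,\partial_x v\,\rho\,dx\,ds$, since $\partial_x\mathbbm{I}_E=0$ holds only formally and the energy of the cut-off need not be small. By Cauchy--Schwarz,
\begin{equation*}
|\A(\phi_n,v)|\le\frac{\sigma^2}{8}\Big(\int_{A_n}|\partial_x\phi_n|^2\rho\Big)^{1/2}\Big(\int_{A_n}|\partial_x v|^2\rho\Big)^{1/2},\qquad A_n:=\{\partial_x\phi_n\neq 0\},
\end{equation*}
where $A_n$ escapes to spatial infinity. The second factor always tends to $0$, being the tail of the convergent integral $\int|\partial_x v|^2\rho<\infty$ ($v\in\V$); the difficulty is to choose $\phi_n$ so that the first factor stays bounded. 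Here the explicit form of $\rho$ is decisive: when $b>0$ the Gaussian factor $e^{-bx^2/2}$ forces $\int|\partial_x\phi_n|^2\rho\to 0$ for any reasonable cut-off, so both error terms vanish trivially. When $b=0$ I would use a joint space--time cut-off $\phi_n(s,x)=\psi(s/n)\,\chi(x/h(n))$ and tune the spatial scale $h(n)\to\infty$ against the polynomial weight $|x+\gamma(s)|^{\delta-1}$, so that $h(n)^{-1}(m(A_n))^{1/2}$ remains controlled; checking that such $\phi_n$ indeed belong to $C_0^1(E)$, increase to $\mathbbm{I}_E$, and close the energy estimate is the technical heart of the argument. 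Once both error terms are shown to vanish, $\int_E\widehat{G}_1 g\,dm=\int_E g\,dm$, whence $(\mathbbm{I}_E-G_1\mathbbm{I}_E,g)=0$ for all nonnegative $g\in C_0^1(E)$ and therefore $G_1\mathbbm{I}_E=\mathbbm{I}_E$ $m$-a.e.
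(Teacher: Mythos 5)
Your skeleton---sub-Markovianity for $G_1\mathbbm{I}_E\le\mathbbm{I}_E$, duality to reduce the converse to mass conservation of the coresolvent, and testing $\e_1(\cdot\,,\widehat G_1g)=(\cdot\,,g)$ against a cut-off sequence increasing to $\mathbbm{I}_E$---is essentially the paper's own reduction (the paper fixes a single $F>0$ in $\H\cap L^1(E,m)$ and uses $\e(W_n,\widehat G_1F)=\int_E(W_n-G_1W_n)F\,dm$, which is the same duality). The genuine gap is exactly where you flag it: the spatial term. Bounding $\A(\phi_n,v)$ by Cauchy--Schwarz requires the first factor $\int_{A_n}|\partial_x\phi_n|^2\rho\,dx\,ds$ to stay bounded, and for $b=0$ this fails. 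For a product cut-off at temporal scale $T_n$ and spatial scale $h_n$ (both necessarily tending to infinity so that $\phi_n\uparrow\mathbbm{I}_E$), the weight $|x+\gamma(s)|^{\delta-1}$ gives $\int_{A_n}|\partial_x\phi_n|^2\rho\asymp T_n\,h_n^{\delta-2}$, which is unbounded for every admissible choice of scales as soon as $\delta\ge2$; this already excludes the plain squared Bessel process of dimension $\ge2$. No tuning of $h(n)$ repairs this, because the second factor, while the tail of a convergent integral, tends to zero at a rate over which you have no control. So as written your argument proves the proposition only for $b>0$ (and, with the extra tuning you sketch, for $b=0$, $\delta<2$), not in the generality claimed.

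The fix---and the paper's actual device---is to avoid Cauchy--Schwarz by integrating the spatial term by parts. Choose $W_n=g_nu_n\mathbbm{I}_E$ so that $\partial_xW_n$ vanishes at $x=\beta(s)$ and $x=-\gamma(s)$, the discontinuity points of $\rho(s,\cdot)$; then no boundary terms arise and
\[
\A(W_n,\widehat G_1F)=-\frac{\sigma^2}{8}\int_{0}^{\infty}\!\!\int_{-\gamma(s)}^{\infty}
\left(\partial_{xx}W_n+\left(\frac{\delta-1}{x+\gamma(s)}-bx\right)\partial_xW_n\right)\widehat G_1F\,\rho\,dx\,ds .
\]
Everything is now tested against the \emph{finite} measure $\widehat G_1F\,\rho\,dx\,ds$ rather than against the energy measure: the coefficients $\partial_{xx}W_n$ and $\frac{\delta-1}{x+\gamma(s)}\partial_xW_n$ are $O(n^{-2})$, the coefficient $bx\,\partial_xW_n$ is merely bounded but is supported on a region escaping to infinity, and dominated convergence finishes the proof. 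Your temporal estimate and the $L^1$-bound on $\widehat G_1g$ are correct and coincide with the paper's; only the treatment of the spatial term needs to be replaced by this integration-by-parts argument.
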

\begin{proof}
In order to prove the conservativity of $\e$ 
it is enough to show that for one 
$F\in \H\cap L^1(E,m)$, $F>0$ $m$-a.e., there exists 
$(W_n)_{n\ge 1}\subset {\cal{F}}$,  $0\le W_n\le \mathbbm{I}_E $, $n\ge 1$, $W_n\uparrow \mathbbm{I}_E $  
as $n\to\infty$, 
such that
$$
\lim_{n\to\infty}\e(W_n,\widehat{G}_1 F)=0.
$$
Indeed, if this is the case then
$$
0=\lim_{n\to\infty}\e(W_n,\widehat{G}_1 F)=\lim_{n\to\infty}\int_E (W_n-G_1 W_n)F \rho dxds=
\int_E (\mathbbm{I}_E-G_1 \mathbbm{I}_E)F \rho dxds,
$$ 
and $G_1 \mathbbm{I}_E=\mathbbm{I}_E$ as desired. We now fix $F$ as above, 
and determine below $(W_n)_{n\ge 1}$.\\
Let $g_n\in C_0^1(\mathbbm{R}^+)$, $u_n\in C_0^2(\mathbbm{R})$, $n\ge 1$, 
such that $0\le g_n,u_n\le 1$,  
$|\partial_t g_n|_{\infty}, |\partial_x u_n|_{\infty}\le L\cdot n^{-1}$, 
$|\partial_{xx} u_n|_{\infty}\le L\cdot n^{-2}$, where $L$ is some positive constant, and

\[ g_n(s)= \left\{ \begin{array}{r@{}l}
  & 1 \ \ \mbox{ if } \ s\in [0,n]\\ 
  & 0 \ \ \mbox{ if } \ s\in[2n,\infty),\\
\end{array} \right. \] \\ 
and 
\[ u_n(x)= \left\{ \begin{array}{r@{}l}
  & 1 \ \ \mbox{ if } \ [-\gamma(2n)]\le x\le [\lambda(0)+1]+n\\ 
  & 0 \ \ \mbox{ if } \ x\ge [\lambda(0)+1]+2n ,\\
\end{array} \right. \] \\
where $[x]:=\sup\{k\in \mathbbm{Z}|k\le x\}$. Then $W_n:=g_n u_n \mathbbm{I}_E\in \F$, $n\ge 1$, 
satisfies $W_n\uparrow \mathbbm{I}_E $ as $n\to\infty$, 
and since $\partial_x W_n(s,\beta(s))=\partial_x W_n(s,-\gamma(s))=0$  for all $s$, we easily find
$$
\e(W_n,\widehat{G}_1 F) = -\frac{\sigma^2}{8}\int_{0}^{\infty}\int_{-\gamma(s)}^{\infty}
\left ( \partial_{xx}W_n+
 \left (\frac{\delta-1}{(x+\gamma(s))}-bx \right)\partial_{x}W_n+
\frac{8}{\sigma^2}\partial_t W_n  \right ) \widehat{G}_1 F\rho dxds,
$$
so that $|\e(W_n,\widehat{G}_1 F)|$ is dominated by 
$$
\frac{L \cdot\sigma^2}{8}\int_{0}^{2n}\int_{[\lambda(0)+1]+n}^{[\lambda(0)+1]+2n}
\left ( \frac{1}{n^{2}}+\frac{\delta-1}{n([\lambda(0)+1]+n+\gamma(0))}+\frac{b([\lambda(0)+1]+2n)}{n} \right ) \widehat{G}_1 F\rho dxds
$$
$$
+\int_{0}^{2n}\int_{-\gamma(s)}^{\infty}\frac{L}{n}\,\widehat{G}_1 F\rho dxds.\\
$$
Noting that $\widehat{G}_1 F\rho dxds$ is a finite measure, we just apply Lebesgue's theorem, and 
the last sum is easily seen to converge to zero as $n\to \infty$. This concludes the proof.  
\end{proof}\\ \\
Let us define the strict capacity corresponding to $\e$. 
We fix $\Phi\in L^1(E,m)$, $0< \Phi \le 1$.  Let $(k G_1 \Phi\wedge 1)_U$ be the $1$-reduced function of 
$k G_1 \Phi\wedge 1:=min(k G_1 \Phi,1)$ on $U$, and let 
$$
\mbox{Cap}_{1,\widehat G_1\Phi}(U)=
\lim_{k\to \infty}\int_E (k G_1 \Phi\wedge 1)_U \Phi dm \ \mbox{ if } \ U\subset E \mbox{ is open}.
$$
If  $A\subset E$ arbitrary then 
$$
\mbox{Cap}_{1,\widehat G_1 \Phi}(A)=\inf\{\mbox{Cap}_{1,\widehat G_1\Phi}(U)|U\supset A, U \mbox{ open}\}.
$$ \\ 
We adjoin an extra point $\Delta$ to $E$ and let $E_{\Delta}:=E\cup\{\Delta\}$ be the one point compactification of $E$. As usual any 
function defined on $E$ is extended to $E_{\Delta}$ putting $f(\Delta)=0$. Given an increasing sequence $\fk$ of closed 
subsets of $E$, we define
$$
C_{\infty}(\{F_k\})=\{f:A\rightarrow \R\mid \bigcup_{k\ge 1}F_k\subset A\subset E,\, 
f_{\mid F_k\cup \{\Delta\}}\  is \ continuous \ \forall k\}.
$$ 
A subset $N\subset E$ is called strictly $\e$-exceptional if $\mbox{Cap}_{1,\widehat G_1 \Phi}(N)=0$. 
An increasing sequence $\fk$ of closed subsets of $E$ is called a strict $\e$-nest if 
$\mbox{Cap}_{1,\widehat G_1 \Phi}(F_k^c)\downarrow 0$ as $k\to\infty$.
A property of points in $E$ holds strictly $\e$-quasi-everywhere (s.$\e$-q.e.) 
if the property holds outside some strictly $\e$-exceptional set.
A function $f$ defined up to some strictly $\e$-exceptional set $N\subset E$ is called 
strictly $\e$-quasi-continuous (s.$\e$-q.c.)
if there 
exists a strict $\e$-nest $\fk$, such that $f\in C_{\infty}(\{F_k\})$. \\  
For a subset $A \subset E_\Delta$ let 
$\sigma_A := \inf\{t > 0 \mid \overline{Y}_t \in A\}$ (resp. $D_{A}=\inf\{t\ge 0|\overline{Y}_t\in A\}$) be the 
{\it first hitting time} 
(resp. {\it first entry time}) w.r.t. ${\mathbbm M}$. 
For a Borel measure $\nu$ on $E$ and a Borel set $B$ let $P_{\nu}(B):=\int_E P_y(B)\nu(dy)$ 
and $E_{\nu}$ be the expectation w.r.t. $P_{\nu}$.
As usual we denote by $E_y$ the expectation w.r.t. $P_y$. If $U\subset E$ is open, then 
\begin{eqnarray}\label{strictcap}
\mbox{Cap}_{1,\widehat G_1\Phi}(U)=\int_E E_{y}[e^{-\sigma_{U}}]\Phi(y) m(dy).
\end{eqnarray}
If $B\subset E$ is an arbitrary Borel measurable set, then
$$
\mbox{Cap}_{1,\widehat G_1\Phi}(B)=\int_E E_y[e^{-D_{B}}]\Phi(y) m(dy).
$$
Both follows from $\cite[\mbox{Lemma\, 0.8}]{Tr3}$.\\
By strict quasi-regularity every element in $\F$ admits a strictly $\e$-q.c. $m$-version 
(see \cite[\mbox{Proposition 0.9}]{Tr3}). 
For a subset $\D\subset \H$ denote by $\~ {\D}$  all the s.$\e$-q.c. $m$-versions of elements in $\D$. 
In particular $\~{\P}_{\F}$ denotes the set of all s.$\e$-q.c. $\rho dy$-versions of 1-excessive elements 
in $\V$ which are dominated by elements of $\F$. We have an analogy, 
namely \cite[\mbox{Theorem 0.16}]{Tr3}, to  
\cite[\mbox{Theorem\,2.3}]{Tr1}. That is: Let $\hat u\in\hatP_\hatF$. 
Then there exists a unique $\sigma$-finite
  and positive measure $\mu_{\hat u}$ on $(E,\B(E))$ 
charging no strictly
  $\e$-exceptional  set, such that
$$
\int_E \~ f \ d\mu_{\hat u} = \lim_{\alpha \to \infty} \e_1(f,\alpha
\widehat{G}_{\alpha+1}\hat u )\quad \forall \~ f\in\~ {\P}_{\F}-\~{\P}_{\F}\ .
$$
Also in  analogy to $\cite{Tr1}$ we introduce the following class of measures 
$$
\widehat{S}_{00}:=\{\mu_{{\hat u}}\mid\ \hat u \in \widehat \P_{\widehat  G_1\H_b^+}\ 
\mbox{and}\ \mu_{\hat{u}}(E)<\infty\} 
$$
where $\widehat  G_1\H_b^+:=\{\widehat  G_1 h\mid h\in \H_b^+\}$.\\ 
For $B\in \B(E)$ the following is known from \cite[\mbox{Theorem 0.17}]{Tr3}: 
$B$ is strictly $\e$-exceptional if, and only if $\mu(B)=0$  for all  $\mu$ in $\widehat{S}_{00}$.\\
Since $(\e, \F)$ is regular, i.e. $C_0(E)\cap\F$ is dense in $C_0(E)$ w.r.t. the uniform norm 
as well as in $\F$, 
it follows that 
$(\e, \F)$ is a (strictly) quasi-regular generalized Dirichlet form on 
$E$. On the other hand we can find a dense algebra of functions, namely  $C^1_0(E)$, in $\F$. 
These two facts imply the existence of a Hunt process associated to $\e$. 
Applying additionally Proposition \ref{cons}, and \cite[\mbox{Theorem 1.9}]{Tr4} we have:\\  
\begin{theo}\label{Hunt}
There exists a Hunt process ${\mathbbm M} = (\Omega, ({\cal F}_t)_{t\ge 0}, (\overline{Y}_t)_{t\ge
0},(P_{y})_{y=(s,x)\in E_\Delta})$ with state space E, and infinite life time, 
such that $R_{\alpha}F(s,x):=\int_0^{\infty}\int_{\Omega}e^{-\alpha t} F(\overline{Y}_t(\omega))P_{(s,x)}(d\omega)dt$ 
is a $\cal E$-q.c. 
$m$-version of $G_{\alpha}F$ for any $\alpha > 0$ and any $F \in {\cal H}_b$. 
Moreover there exists a $\e$-exceptional set $N\subset E$ such that
$$
P_{(s,x)}\left (t\mapsto \overline{Y}_t \mbox{ is continuous on } [0,\infty) \right )=1 
\mbox{ for every } (s,x)\in E \setminus N.
$$
\end{theo}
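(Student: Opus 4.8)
The plan is to realize ${\mathbbm M}$ as the Markov process canonically attached to the strictly quasi-regular generalized Dirichlet form $\e$, and then to establish separately the two extra features demanded by the statement: the infinite life time and the continuity of the trajectories. First I would invoke the general existence theory for strictly quasi-regular generalized Dirichlet forms, as developed in \cite{Tr3}. We have already recorded in the preceding discussion that $(\e,\F)$ is strictly quasi-regular and that $C_0^1(E)$ is a dense algebra in $\F$; these are precisely the two structural hypotheses under which that theory yields an $m$-tight special standard process ${\mathbbm M}=(\Omega,({\cal F}_t)_{t\ge0},(\overline Y_t)_{t\ge0},(P_y)_{y\in E_\Delta})$ with state space $E_\Delta$ whose resolvent is properly associated with $\e$. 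Concretely, the aim is to arrange that for every $\alpha>0$ and $F\in\H_b$ the kernel $R_\alpha F(s,x)=\int_0^\infty\int_\Omega e^{-\alpha t}F(\overline Y_t(\omega))\,P_{(s,x)}(d\omega)\,dt$ is a strictly $\e$-quasi-continuous $m$-version of $G_\alpha F$; this pins down ${\mathbbm M}$ up to strict $\e$-equivalence.

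Next I would remove the cemetery. Proposition \ref{cons} supplies $G_1\mathbbm{I}_E=\mathbbm{I}_E$ $m$-a.e., i.e.\ $\e$ is conservative. Transporting this identity to the process along the resolvent correspondence just described gives $R_1\mathbbm{I}_E=\mathbbm{I}_E$ s.$\e$-q.e., which should force $P_{(s,x)}(\zeta=\infty)=1$ off a strictly $\e$-exceptional set, where $\zeta$ denotes the life time. Hence ${\mathbbm M}$ has infinite life time and $P_{(s,x)}$-a.s.\ never reaches $\Delta$, quasi-everywhere on $E$.

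Finally I would upgrade the special standard process to a Hunt process whose paths are continuous, and here I would appeal to \cite[Theorem 1.9]{Tr4}. The decisive structural feature is that the symmetric part $\A$ of $\e$ is the pure Dirichlet integral (\ref{DF}), carrying neither a killing nor a jumping contribution, so $\e$ is local; combined with the monotonicity of $E_t$ and of $\rho$ in time --- the very assumptions that made the construction of $\e$ possible in the first place --- this places us exactly in the framework of that theorem, which then provides the Hunt property together with an $\e$-exceptional set $N\subset E$ off which $t\mapsto\overline Y_t$ is $P_{(s,x)}$-a.s.\ continuous on $[0,\infty)$.

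The hard part will be the continuity of the trajectories, not the existence of the process. The abstract construction of the first two paragraphs a priori delivers only a c\`adl\`ag special standard process, and ruling out jumps is exactly the point at which the locality of the Dirichlet integral (\ref{DF}) and the time-monotonicity hypotheses must be brought to bear, through \cite[Theorem 1.9]{Tr4}. Everything else in the argument reduces to verifying that the hypotheses of the cited existence theorems are in force, which the preceding sections have already arranged.
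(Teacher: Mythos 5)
Your proposal follows essentially the same route as the paper: strict quasi-regularity of $(\e,\F)$ together with the dense algebra $C^1_0(E)\subset\F$ yields the associated Hunt process, Proposition \ref{cons} (conservativity) gives the infinite life time, and \cite[Theorem 1.9]{Tr4} supplies the $\e$-exceptional set $N$ off which the paths are continuous. The paper states this in exactly the same three steps, so there is nothing to add.
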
\bigskip
We will see that ${\mathbbm M} = (\Omega, ({\cal F}_t)_{t\ge 0}, (\overline{Y}_t)_{t\ge
0},(P_{y})_{y=(s,x)\in E_\Delta})$ in fact solves (\ref{transform}).\bigskip
\subsubsection{Identification of the process $\overline{Y}$ associated to the Generalized Dirichlet form involving the Markovian local time}\label{GDF2}
\bigskip\medskip
We want to identify $\overline{Y}$. 
Let us first recall some basic definitions and facts about additive functionals related to generalized Dirichlet forms.\\ \\
A family $(A_t)_{t \ge 0}$ of extended real valued functions on $\Omega$ is called an
{\it additive functional} (abbreviated AF) of ${\mathbbm M} = (\Omega, ({\cal F}_t)_{t\ge 0}, (\overline{Y}_t)_{t\ge
0},(P_{y})_{y\in E_\Delta})$ (w.r.t. $\mbox{Cap}_{1,\widehat G_1\Phi}$), if:\\ \\
(i) $A_t(\cdot)$ is ${\cal F}_t$-measurable for all $t \ge 0$.\\ \\
(ii) There exists a {\it defining} set $\Lambda \in {\cal F}_{\infty}$ and 
a strictly \e-exceptional set $N \subset E$, such that 
$P_{y}(\Lambda)=1$ for all $y \in E \setminus N$, $\theta_t(\Lambda) \subset \Lambda$ 
for all $t > 0$ and for each $\omega \in \Lambda$, $t \mapsto A_t(\omega)$  
is right continuous on $[0, \infty)$ and has left  limits on $(0,\zeta(\omega))$, 
$A_0(\omega) = 0$, $|A_t(\omega)| < \infty$ for $t < \zeta(\omega)$, 
$A_t(\omega) = A_\zeta(\omega)$ for $t \ge \zeta(\omega)$ and 
$A_{t+s}(\omega)=A_t(\omega)+A_s(\theta_t \omega)$ for $s,t \ge 0$.\\ \\ 
An AF $A$ is called a {\it continuous additive functional} (abbreviated CAF), if
$t \mapsto A_t(\omega)$ is continuous on $[0,\infty)$, a 
{\it positive, continuous additive functional} (abbreviated PCAF) if 
$A_t(\omega) \ge 0$ and a finite AF, if $\mid A_t(\omega)\mid<\infty$ for all 
$t \ge 0, \omega \in \Lambda$. 
Two AF's $A$,$B$ are said 
to be equivalent (in notation $A=B$) if for each $t>0$ $P_{y}(A_t=B_t)=1$ for strictly $\e$-q.e. $y\in E$.
The {\it energy} of an AF $A$ of $\M$ is defined by
\begin{eqnarray}\label{energie}
e(A) & = & \lim_{\alpha\to\infty} \frac12\alpha^2 E_{\rho dy}\left [\int_{0}^{\infty}e^{-\alpha\,t}A_t^2dt\right ],
\end{eqnarray}
whenever this limit exists in $[0,\infty]$. We will set $\overline{e}(A)$ for the same expression but with 
$\overline{\lim}$ instead of $\lim$.\\ \\
Let $\widetilde{F}$ be a strictly $\e$-q.c. $\rho dy$-version of some element in $\H$.
The additive functional 
$$
A^{[F]}:=(\widetilde{F}(\overline{Y}_t)-\widetilde{F}(\overline{Y}_0))_{t\ge 0}
$$ 
is independent of the choice of $\widetilde{F}$ (i.e. defines the same equivalence class of AF's for 
any strictly $\e$-q.c. $\rho dy$-version $\widetilde{F}$ of $F$). 
The sub-Markovianity of $(\widehat G_\alpha)_{\alpha>0}$ implies
\begin{eqnarray*}
\overline{e} (A^{[F]}) & = & \overline{\lim_{\alpha\to \infty}}\left ( \alpha (F -\alpha G_\alpha F, F)_{\H}-
\frac{\alpha}{2}\int_E (F^2-\alpha G_{\alpha}F^2 )\rho dy\right )\\
& \le & \overline{\lim_{\alpha\to \infty}}\ \alpha (F -\alpha G_\alpha F, F)_{\H}.
\end{eqnarray*}
Since $\F\subset\V^{\F}$
(cf. e.g. proof of $\cite[\mbox{Lemma 3.1}]{Tr2}$) it follows 
$\lim_{\alpha\to \infty}\alpha  \widehat G_\alpha F=F$ weakly in $\V$. Hence 
$\lim_{\alpha\to \infty}\alpha (F -\alpha G_\alpha F, F)_{\H}=
\lim_{\alpha\to \infty}\e(F,\alpha  \widehat G_\alpha F)=\e(F,F)$ whenever $F\in \F$. 
In particular   
\begin{eqnarray}\label{999}
\overline{e} (A^{[F]}) & \le & 2|F|_{\F}^2 \ \mbox{ for any } F\in \F. 
\end{eqnarray}
Define
\begin{eqnarray*}
{\cal{M}}  & = & \{M|M \mbox{ is a finite AF}, E_{y}[M_t^2]< \infty, E_{y}[M_t]=0 \\
 &  &  \hspace*{+1.5cm} \mbox{ for strictly \e-q.e } y \in E\ \mbox{and all } t\ge 0\}.
\end{eqnarray*}
$M \in {\cal{M}}$ is called a {\it martingale additive functional} (MAF).
Furthermore define
\begin{eqnarray*}
\stackrel{\ \circ}{\cal{M}}  & = & \{M \in{\cal{M}} |\  e(M) < \infty\}.
\end{eqnarray*}
The elements of $\stackrel{\ \circ}{\cal{M}}$ are called MAF's of finite energy.\\  \\ 
Let $A$ be a PCAF of $\M$. Its Revuz measure $\mu_A$ (see $\cite[\mbox{Theorem 3.1}]{Tr1}$) is defined by 
\begin{eqnarray}\label{I3}  
\int_E G(y)\mu_A(dy) 
& = & \lim_{\alpha\to\infty} \alpha E_{\rho dy}\left [\int_{0}^{\infty}e^{-\alpha\,t}G(\overline{Y}_t)dA_t\right ] \mbox{ for all } G\in \B^{+}.
\end{eqnarray}
The dual predictable projection $\langle M \rangle$ of the square bracket of $M \in \stackrel{\ \circ}{\cal{M}}$ 
is a PCAF of $\M$. 
It then follows from ($\ref{energie}$), ($\ref{I3}$), that one half of the total mass of the Revuz measure $\mu_{\langle M\rangle}$ 
is equal to the energy of $M$, i.e.
\begin{eqnarray}\label{pcaf=quad}
e(M)=\frac{1}{2}\int_{E}\mu_{\langle M\rangle}(dy).
\end{eqnarray}
Therefore $\mu_{\langle M\rangle}$ is also called the energy measure of $M$. For $M$, $L\in\stackrel{\ \circ}{\cal{M}}$ let
$$
\langle M,L\rangle :=\frac{1}{2}\left (\langle M+L \rangle -\langle M\rangle-\langle L\rangle \right ).
$$
Then $(\langle M,L\rangle_t)_{t\ge 0}$ is a CAF of bounded variation on each finite interval.
Furthermore the finite signed measure $\mu_{\langle M,L\rangle}$ defined by 
$\mu_{\langle M,L\rangle}:=\frac{1}{2}(\mu_{\langle  M+L\rangle}-
\mu_{\langle M\rangle}-\mu_{\langle L\rangle})$ is related to $\langle M,L\rangle$ in the 
sense of ($\ref{I3}$). If $G \in\B_b^+$, then $\int_E G d\mu_{\langle \cdot,\cdot \rangle }$ is symmetric, 
bilinear and positive on $\stackrel{\ \circ}{\cal{M}}\times \stackrel{\ \circ}{\cal{M}}$.\\ \\
Define 
\begin{eqnarray*} 
{\cal{N}}_c & = & \{N| N \mbox{ is a finite CAF}, e(N) = 0, E_{y}[|N_t|]<\infty \\
&  & \hspace*{+1.5cm}  \mbox{ for strictly \e-q.e. } y \in E\ \mbox{and all } t\ge 0\}.
\end{eqnarray*}\\ 
For $F\in \F$, $A^{[F]}$ can uniquely 
be decomposed (see $\cite[\mbox{Theorem 4.5.(i)}]{Tr1}$, $\cite[\mbox{Remark 0.17}]{Tr3}$) as
\begin{eqnarray}\label{I5}
A^{[F]}
& = & M^{[F]}+N^{[F]}, \ \ M^{[F]}\in \stackrel{\ \circ}{\cal{M}}, \ \ N^{[F]}\in{\cal N}_c.
\end{eqnarray}
The identity (\ref{I5}) means that both sides are equivalent as additive functionals w.r.t. $\mbox{Cap}_{1,\widehat G_1\Phi}$. 
The uniqueness of (\ref{I5}) implies $a M^{[F]}+bM^{[G]}=M^{[aF+bG]}$, 
$a N^{[F]}+bN^{[G]}=N^{[aF+bG]}$, for any $a,b\in \R$, $F,G\in \F$.\\  
From Lemma 2.1 in \cite{Tr4} we know that for $F\in \F$ 
$$
\mu_{\langle M^{[F]}\rangle}(dxds)=\frac{\sigma^2}{4}(\partial_x F)^2 \rho dxds,
$$
and moreover, if $F$ is constant $\rho dy$-a.e. on a Borel set $B$. Then 
$$
\mu_{\langle M^{[F]}\rangle}(B)=0.
$$
Now let us come back to the identification of $\overline{Y}$. In order to identify the drift part we might proceed as follows. 
Denote by $\delta_x$ the Dirac measure in $x\in \R$. If $\beta(s)>-\gamma(s)$ a.e. $s$, 
then integrating by parts in 
(\ref{DF}) we obtain that the generator 
of the diffusion is given informally in the sense of distributions by 
\begin{eqnarray*}
LF(s,x) & = & 
\frac{\sigma^2}{8}\partial_{xx}F(s,x)+\frac{\sigma^2}{8}\frac{\delta-1}{(x+\gamma(s))}
\partial_x F(s,x)+ \partial_t F(s,x)+\nu_F(dx)ds\\ 
\end{eqnarray*}
where the boundary term $\nu_F$ is given by 
\begin{eqnarray}\label{smoo1}
\nu_F(dx)ds & = & \left \{p\partial_{x}^+ F(s,x)-(1-p)\partial_{x}^- F(s,x)\right \}\frac{\sigma^2}{8}|x+\gamma(s)|^{\delta-1}e^{-\frac{bx^2}{2}}\delta_{\beta(s)}(dx)ds \nonumber\\ 
&&-(1-p)\partial_{x}^+F(s,x)\frac{\sigma^2}{8}|x+\gamma(s)|^{\delta-1}e^{-\frac{bx^2}{2}} \delta_{-\gamma(s)}(dx)ds, \\ \nonumber
\end{eqnarray}
and where $\partial_{x}^+$, resp. $\partial_{x}^-$, denote the right hand, resp. the left hand 
derivative in space.\\ 
If we can show that $|x+\gamma(s)|^{\delta-1}e^{-\frac{bx^2}{2}}\delta_{\kappa(s)}(dx)ds$, $\kappa: \R^+ \rightarrow \R^+$ locally bounded 
and measurable, is a smooth measure w.r.t. $\cal{A}$, then there is a unique 
PCAF representing this measure 
by Theorem 2.2 in \cite{Tr4}. Theorem 2.3 in \cite{Tr4} then allows to identify the drift part. 
We will identify the corresponding diffusion when $\delta\ge 1$. \\
Let $\R^+\times \R=\bigcup_{n\ge 1}K_n$, where $(K_n)_{n\ge 1}$ be 
an increasing sequence of compact subsets of $\R^+\times \R$. Let $\overline{E}_n:=K_n \cap E$, $n\ge 1$.   
Since ${C^1_0(E)}\subset \F$ dense, it follows from $\cite[\mbox{III.Remark 2.11}]{St1}$ that 
$(\overline{E}_n)_{n\ge 1}$ is an $\e$-nest 
in the sense of $\cite[\mbox{III.Definition 2.3(i)}]{St1}$. Consequently, 
$P_y(\lim_{n\to \infty} \sigma_{\overline{E}_n^c}<\infty)=0$ for $\e$-q.e. $y\in E$, 
hence in particular for 
$\rho dy$-a.e. $y\in E$ (see $\cite[\mbox{IV. Lemma 3.10}]{St1}$).
We obtain that $(\overline{E}_n)_{n\ge 1}$ is an strict $\e$-nest by ($\ref{strictcap}$). 
$\cite[\mbox{Lemma 0.8(ii)}]{Tr3}$ now implies 
$P_y(\lim_{n\to \infty} \sigma_{\overline{E}_n^c}<\infty)=0$ for strictly $\e$-q.e. $y\in E$. 
We may without loss 
of generality assume that 
$\overline{E}_n\subset [0,n]\times \R\cap E$, $n \ge 1$, and that $\overline{E}_n$ is contained 
in the interior 
of $\overline{E}_{n+1}$ for any $n\ge 1$. 
From now on we will fix such a strict $\e$-nest $(\overline{E}_n)_{n\ge 1}$.\\
A proof following similar lines of arguments to the proof of the next proposition can be found in \cite{Tr4}. However, 
we include it for the readers convenience. 
\begin{prop}\label{smoo}
Let $\delta\ge 1$, $\kappa:\R^+ \to\R$ be measurable and locally bounded, 
such that $(s,\kappa(s))\in E_s$ for each $s\ge 0$. 
The measure 
$$
\mathbbm{I}_{\overline{E}_N}(s,x) |x+\gamma(s)|^{\delta-1}
e^{-\frac{bx^2}{2}}\delta_{\kappa(s)}(dx)ds, \ N\ge 1, 
$$
is smooth w.r.t. $(\A,\V)$.
\end{prop}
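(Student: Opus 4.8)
The plan is to show that the given finite measure, call it $\mu$, is of finite energy integral with respect to the symmetric form $(\A,\V)$, i.e. to produce a constant $C=C_N$ with
$$
\int_E |F|\,d\mu \le C\sqrt{\A_1(F,F)}\qquad\text{for all }F\in C_0^1(E),
$$
where $\A_1(F,F)=\A(F,F)+(F,F)_{\H}$. Once this is established the conclusion is standard: a positive Radon measure satisfying such an energy estimate on the core $C_0^1(E)$ charges no $\A$-exceptional set and is smooth with respect to $(\A,\V)$. Note first that $\mu$ is finite: since $\overline{E}_N$ is compact and $\kappa$ is locally bounded, the total mass $\int_0^\infty \mathbbm{I}_{\overline{E}_N}(s,\kappa(s))\,|\kappa(s)+\gamma(s)|^{\delta-1}e^{-b\kappa(s)^2/2}\,ds$ is the integral of a bounded function over a bounded $s$-interval. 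Write $w(s):=|\kappa(s)+\gamma(s)|^{\delta-1}e^{-b\kappa(s)^2/2}$ and $c_0:=\min(p,1-p)>0$, so that $\rho(s,x)\ge c_0\,|x+\gamma(s)|^{\delta-1}e^{-bx^2/2}$ on $E_s$.

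The core of the argument is a one-dimensional slice estimate. Fix $s$ with $(s,\kappa(s))\in\overline{E}_N$ and abbreviate $\kappa=\kappa(s)$; since $\kappa\ge-\gamma(s)$ the unit interval $[\kappa,\kappa+1]$ lies in $E_s$. Averaging the identity $F(s,x)=F(s,\kappa)+\int_\kappa^{x}\partial_x F(s,y)\,dy$ over $x\in[\kappa,\kappa+1]$ yields
$$
|F(s,\kappa)|\le \int_\kappa^{\kappa+1}|F(s,x)|\,dx+\int_\kappa^{\kappa+1}|\partial_x F(s,x)|\,dx.
$$
Inserting $\rho^{1/2}\rho^{-1/2}$ and applying the Cauchy--Schwarz inequality in $x$ bounds each term on the right by $(\int_\kappa^{\kappa+1}G^2\rho\,dx)^{1/2}(\int_\kappa^{\kappa+1}\rho^{-1}\,dx)^{1/2}$, with $G=F$, resp. $G=\partial_x F$.

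The main obstacle, and the place where the hypothesis $\delta\ge 1$ is essential, is to control $w(s)(\int_\kappa^{\kappa+1}\rho^{-1}\,dx)^{1/2}$ uniformly on $\overline{E}_N$, because $\rho$ degenerates at the boundary $x=-\gamma(s)$ when $\delta>1$. Here the degeneracy of $\rho$ is exactly compensated by the density weight of $\mu$: using $\rho\ge c_0|x+\gamma(s)|^{\delta-1}e^{-bx^2/2}$ and $x+\gamma(s)\ge \kappa+\gamma(s)\ge0$ together with $\delta-1\ge0$, one gets $\int_\kappa^{\kappa+1}\rho^{-1}\,dx\le c_0^{-1}C_N\,(\kappa+\gamma(s))^{-(\delta-1)}$, where $C_N$ bounds $e^{bx^2/2}$ on the bounded $x$-range of $\overline{E}_N$. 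Multiplying by $w(s)$ leaves $w(s)(\int_\kappa^{\kappa+1}\rho^{-1}\,dx)^{1/2}\le (c_0^{-1}C_N)^{1/2}(\kappa+\gamma(s))^{(\delta-1)/2}e^{-b\kappa^2/2}$, which is bounded on $\overline{E}_N$ since $\kappa+\gamma(s)$ is bounded there and the remaining exponent is nonnegative. (When $\kappa+\gamma(s)=0$ one has $w(s)=0$ for $\delta>1$, while for $\delta=1$ there is no degeneracy and $\int_\kappa^{\kappa+1}\rho^{-1}\,dx$ is bounded outright.) For $\delta<1$ this compensation fails and the argument must be modified, which is the reason that case is treated separately in Remark \ref{extension}.

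Combining the slice estimate with this bound gives a constant $C_N'$ with $\int_E|F|\,d\mu\le C_N'\int_0^N\big[(\int_\kappa^{\kappa+1}F^2\rho\,dx)^{1/2}+(\int_\kappa^{\kappa+1}(\partial_x F)^2\rho\,dx)^{1/2}\big]\,ds$. A final application of the Cauchy--Schwarz inequality in $s$ over the finite interval $[0,N]$, followed by enlarging the inner $x$-integral from $[\kappa,\kappa+1]$ to $E_s=[-\gamma(s),\infty)$, bounds the two summands by $(F,F)_{\H}^{1/2}$ and $\tfrac{\sqrt8}{\sigma}\A(F,F)^{1/2}$ up to the factor $\sqrt N\,C_N'$. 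This is precisely the desired estimate $\int_E|F|\,d\mu\le C\sqrt{\A_1(F,F)}$, so $\mu$ is of finite energy integral and therefore smooth with respect to $(\A,\V)$.
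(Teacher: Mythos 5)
Your proof is correct, but it takes a genuinely different route from the paper's. The paper writes $F(s,\kappa(s))\,|\kappa(s)+\gamma(s)|^{\delta-1}e^{-b\kappa(s)^2/2}$ as $-\int_{\kappa(s)}^{\kappa_{Nmax}+1}\partial_x\bigl(\psi F\,|x+\gamma(s)|^{\delta-1}e^{-bx^2/2}\bigr)dx$, i.e.\ it integrates by parts against the full weight; differentiating the weight produces an extra term $I(F)$ involving $(\delta-1)|x+\gamma(s)|^{\delta-2}$ that is \emph{not} controlled by the energy, so the paper cannot conclude a finite-energy-integral bound and instead argues via a compact $K$ with $\mbox{Cap}^{\A}(K)=0$, a minimizing sequence $F_n\in C^1_{0,K}(E)$ chosen bounded and converging to $0$ a.e., and dominated convergence to kill $I(F_n)$. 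You avoid differentiating the weight altogether: you average $F(s,\kappa)$ over the unit interval $[\kappa,\kappa+1]\subset E_s$, apply Cauchy--Schwarz with weight $\rho$, and then absorb the density $|\kappa+\gamma(s)|^{\delta-1}e^{-b\kappa^2/2}$ of $\mu$ into the degeneracy of $\rho$ via the lower bound $\rho\ge\min(p,1-p)\,|x+\gamma(s)|^{\delta-1}e^{-bx^2/2}$ and the monotonicity of $y\mapsto y^{\delta-1}$ for $\delta\ge 1$. This yields the clean estimate $\int|F|\,d\mu\le C_N\sqrt{\A_1(F,F)}$ on $C^1_0(E)$, which is strictly stronger than what the paper proves (finite energy integral rather than mere smoothness) and makes the final step a one-line evaluation along a capacity-minimizing sequence, with no a.e.-convergence or dominated-convergence argument needed. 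Both proofs use $\delta\ge1$ in an essential way, but for different reasons: the paper needs local integrability of $|x+\gamma(s)|^{\delta-2}$, while you need $|x+\gamma(s)|^{\delta-1}$ to be nondecreasing in $x$ so that its value at the left endpoint $\kappa$ bounds it from below on $[\kappa,\kappa+1]$; your parenthetical treatment of the degenerate case $\kappa(s)=-\gamma(s)$ (where $w(s)=0$ for $\delta>1$) correctly disposes of the only point where the bound could fail.
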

\begin{proof}
We only show the statement for $\delta>1$. The proof for $\delta=1$ works in the 
same manner and is even easier since the derivative of $y\mapsto |y|^{\delta-1}$ 
disappears as it is constant, 
so there are less additional terms (cf. below). 
Let $N\ge 1$. Let $F\in {C^1_0(E)}$, $\psi \in C_0^{\infty}(\R)$, $0\le \psi \le 1$, 
$|\partial_x \psi |_{\infty}\le 2$. Since  $\kappa$ is locally bounded, the following two 
values 
$$
\kappa_{Nmin}:=\inf\{\kappa(t)|t\in [0,N]\},\ \ \ \ \kappa_{Nmax}:=\sup\{\kappa(t)|t\in [0,N]\}, 
$$
are finite. Let $\psi=1$ on $[\kappa_{Nmin},\kappa_{Nmax}]$, 
$\psi=0$ on $[\kappa_{Nmax}+1,\infty[$. For $s\in [0,N] $ we have 
$$
F(s,\kappa(s))|\kappa(s)+\gamma(s)|^{\delta-1}e^{-\frac{b\kappa(s)^2}{2}}=
-\int_{\kappa(s)}^{\kappa_{Nmax}+1}\partial_x 
\left (\psi(x) F(s,x)|x+\gamma(s)|^{\delta-1}e^{-\frac{bx^2}{2}}\right )dx
$$
and thus
\begin{eqnarray}\label{987}
&& \hspace*{-1cm}\int_{\overline{E}_N}|F|(s,x)|x+\gamma(s)|^{\delta-1}e^{-\frac{bx^2}{2}}
\delta_{\kappa(s)}(dx)ds \nonumber  \\
&\le  &
\int_{0}^{N}\left |\int_{\kappa(s)}^{\kappa_{Nmax}+1}\partial_x 
\left (\psi(x)F(s,x)|x+\gamma(s)|^{\delta-1}e^{-\frac{bx^2}{2}}\right )dx\right |ds \nonumber \\
& \le & 2\int_{0}^{N} \int_{\kappa(s)}^{\kappa_{Nmax}+1}
\left (|\partial_x F|+|F|\right )|x+\gamma(s)|^{\delta-1}e^{-\frac{bx^2}{2}} dxds\nonumber \\ 
&&\hspace*{+2cm}+\int_{0}^{N} 
\int_{\kappa(s)}^{\kappa_{Nmax}+1} |F|\left ((\delta-1)|x+\gamma(s)|^{\delta-2}
-bx |x+\gamma(s)|^{\delta-1}\right )e^{-\frac{bx^2}{2}}dxds \nonumber \\
& \le &  C_N \sqrt{\A_{\frac{\sigma^2}{4}}(F,F)}+I(F),
\end{eqnarray}
with 
$$
I(F):=\int_{0}^{N} 
\int_{\kappa(s)}^{\kappa_{Nmax}+1} |F|\left ((\delta-1)|x+\gamma(s)|^{\delta-2}
-bx |x+\gamma(s)|^{\delta-1}\right )e^{-\frac{bx^2}{2}}dxds,
$$ 
and $C_N=\frac{8}{\sigma}\sqrt{\int_{0}^{N}\int_{\kappa(s)}^{\kappa_{Nmax}+1}
|x+\gamma(s)|^{\delta-1}e^{-\frac{bx^2}{2}}dxds}$. \\
Let $K\subset E$ be compact, and $\mbox{Cap}^{\A}(K)=0$. By ($\ref{capa}$) 
\begin{eqnarray*}
\mbox{Cap}^{\A}(K)=\inf\{\A_1(F,F);F\in C^{1}_{0,K}(E)\}, 
\end{eqnarray*}
where $C^{1}_{0,K}(E)=\{F\in {C^1_0(E)}| F(s,x)\ge 1, \forall (s,x)\in K\}$. Hence, there 
exists $(F_n)_{n\in\N}\subset {C^1_0(E)}$, $F_n(s,x)\ge 1$, for every 
$n\in\N$, $(s,x)\in K$, such that $|F_n|_{\V}\to 0$ as $n\to\infty$. 
Since normal contractions operate on $\V$ we may assume that 
$\sup_{n\in \N}\sup_{(s,x)\in K}|F_n(s,x)|\le C$. 
Selecting a subsequence if necessary we may also assume that 
$\lim_{n\to\infty}|F_n|=0$ $\rho(s,x) dxds$-a.e, hence  $dxds$-a.e. Consequently, 
using Lebesgue's theorem we obtain
$$
I(F_n)\to 0 \mbox{ as } n\to\infty .
$$  
Therefore by ($\ref{987}$)
\begin{eqnarray*}
&&\int_{\overline{E}_N}1_K(s,x)|x+\gamma(s)|^{\delta-1}e^{-\frac{bx^2}{2}}
\delta_{\kappa(s)}(dx)ds \\
&\le& \limsup_{n\to\infty}
\int_{\overline{E}_N}|F_n|(s,x)|x+\gamma(s)|^{\delta-1}e^{-\frac{bx^2}{2}}\delta_{\kappa(s)}(dx)ds\\
& \le & \limsup_{n\to\infty}\left \{ C_N \sqrt{\A_{\frac{\sigma^2}{4}}(F_n,F_n)}+I(F_n)\right \}=0
\end{eqnarray*}
Since $1_{\overline{E}_N}(s,x) |x+\gamma(s)|^{\delta-1}e^{-\frac{bx^2}{2}}\delta_{\kappa(s)}(dx)ds$, 
as well as $\mbox{Cap}^{\A}$ are inner regular we obtain that 
the measure $1_{\overline{E}_N}(s,x) |x+\gamma(s)|^{\delta-1}e^{-\frac{bx^2}{2}}\delta_{\kappa(s)}(dx)ds$ 
is smooth w.r.t. $(\A,\V)$.
\end{proof}\\ \\ \\
Let us choose  $(J_M)_{M\ge 1}$, 
$(H_{M})_{M\ge 1}\subset {C^2_0(E)}$, with 

\[ H_M(s,x):= \left\{ \begin{array}{r@{\quad}l}
 x&\mbox{for}\ (s,x)\in \overline{E}_M \\ 
 0&\mbox{for}\ (s,x)\in \overline{E}_{M+1}^c,  \end{array} \right. \] \\ 
$M\ge 1$, and  

\[ J_M(s,x):= \left\{ \begin{array}{r@{\quad}l}
 s&\mbox{for}\ (s,x)\in \overline{E}_M \\ 
 0&\mbox{for}\ (s,x)\in \overline{E}_{M+1}^c,  \end{array} \right. \] \\ 
$M\ge 1$. Let further 
$$
H(s,x):=x,
$$
and 
$$
J(s,x):=s.
$$\\
$(H_{M})_{M\ge 1}$ (resp. $(J_{M})_{M\ge 1}$) is a localizing sequence for $H$ (resp. $J$). Obviously 
$$
A_{t\wedge \sigma_{\overline{E}_M^c}}^{[H_K]}=A_{t\wedge \sigma_{\overline{E}_M^c}}^{[H_L]} \mbox{for any } K\ge L\ge M.
$$
We claim that 
$$
M_{t\wedge \sigma_{\overline{E}_M^c}}^{[H_K]}=M_{t\wedge \sigma_{\overline{E}_M^c}}^{[H_L]} \mbox{for any } K\ge L\ge M.
$$\\
Indeed, for strictly $\e$-q.e $y\in E$, and any $t\ge 0$,\\
\begin{eqnarray*}
E_{y}\left [\langle M^{[H_K-H_L]}\rangle_{t\wedge\sigma_{\overline{E}_M^c}}\right ]
& = & E_{y}\left [\int_{0}^{t\wedge\sigma_{\overline{E}_M^c}} 1_{\overline{E}_M}(\overline{Y}_s)\,d\langle M^{[H_K-H_L]} \rangle_{s}\right ]\\
& \le & E_{y}\left [\int_{0}^{t}1_{\overline{E}_M}(\overline{Y}_s)\,d\langle M^{[H_K-H_L]} \rangle_{s}\right ].\\
\end{eqnarray*}
By Lemma 2.1(ii) in \cite{Tr4} $\mu_{\int_{0}^{\cdot}1_{\overline{E}_M}(\overline{Y}_s)d\langle M^{[H_K-H_L]} \rangle_s}=
\mu_{\langle M^{[H_K-H_L]} \rangle}(\overline{E}_M)=0$.
Thus by injectivity of the Revuz-correspondence (see \cite[\mbox{Remark 5.2(ii)}]{Tr1}) 
$E_{y}\left [ {\int_{0}^{t}1_{\overline{E}_M}(\overline{Y}_s)\,d\langle M^{[H_K-H_L]} \rangle_{s}}\right ]=0$ strictly 
$\e$-q.e. $y\in E$. Hence the same 
is true for $E_{y}\left [\langle M ^{[H_K-H_L]} \rangle_{t\wedge\sigma_{\overline{E}_M^c}}\right ] $. 
We know that $\left ((M^{[H_K-H_L]}_t)^2-\langle M ^{[H_K-H_L]} \rangle_{t}\right )_{t\ge 0}$ is
 a martingale w.r.t. $P_y$ for strictly $\e$-q.e. $y\in E$. The optional sampling theorem then implies 
$$
E_{y}\left [(M^{[H_K-H_L]}_{t\wedge\sigma_{\overline{E}_M^c}})^2\right ] =E_{y}\left [\langle M^{[H_K-H_L]} 
\rangle_{t\wedge\sigma_{\overline{E}_M^c}}\right ] =0
$$   
for strictly $\e$-q.e. $y\in E$ and the claim is shown. The analogous statements hold for $A^{[J_K]}$, $M^{[J_K]}$. Thus we may set 
$$
M^{[H]}_t:=\lim_{M\to \infty}M^{[H_M]}_t\hspace*{2cm}N^{[H]}_t:=A^{[H]}_t-M^{[H]}_t,
$$
and 
$$
M^{[J]}_t:=\lim_{M\to \infty}M^{[J_M]}_t\hspace*{2cm}N^{[J]}_t:=A^{[J]}_t-M^{[J]}_t,
$$
in order to obtain
$$
A^{[H]}_t=M^{[H]}_t+N^{[H]}_t,\hspace*{+2cm} A^{[J]}_t=M^{[J]}_t+N^{[J]}_t.
$$
Note that $N^{[H]}_t=\lim_{M\to \infty}N^{[H_M]}_t$, $N^{[J]}_t=\lim_{M\to \infty}M^{[J_M]}_t$. We want to find the 
explicit expressions for  $M^{[J]}$, $N^{[J]}$, $M^{[H]}$, $N^{[H]}$. 
Let $F\in C^2_0(E)$. Integrating by parts we obtain for any $G\in C^1_0(E)$
\begin{eqnarray}\label{id1}
-\e(F,G) & = & \int_{E} \left ( \frac{\sigma^2}{8}\left (\partial_{xx}F+
\left (\frac{\delta-1}{(x+\gamma(s))}-bx\right )\partial_x F\right )
+\partial_t F\right)G\rho dxds\nonumber  \\ 
 &&-(1-p)\frac{\sigma^2}{8} \int_{0}^{\infty}\int_{-\gamma(s)}^{\beta(s)}
\partial_x\left (\partial_x F\,G |x+\gamma(s)|^{\delta-1}e^{-\frac{bx^2}{2}}\right ) dxds \nonumber \\  
&&-p\frac{\sigma^2}{8}\int_{0}^{\infty}\int_{\beta(s)}^{\infty}
\partial_x\left (\partial_x F\,G |x+\gamma(s)|^{\delta-1}e^{-\frac{bx^2}{2}}\right )  dxds \nonumber \\ 
&=&\int_{E} \left ( \frac{\sigma^2}{8}\left (\partial_{xx}F
+\left (\frac{\delta-1}{(H+\gamma\circ J)}-bH\right )\partial_x F\right )
+\partial_t F\right)G\rho dxds \nonumber \\ 
&&\hspace*{-2cm}+ \frac{\sigma^2}{8}\int_{E} G\partial_x F 
\left \{(1-p)\,\mathbbm{I}_{\{\beta(s)>-\gamma(s)\}}  
+p\mathbbm{I}_{\{\beta(s)=-\gamma(s)\}}\right \} |x+\gamma(s)|^{\delta-1}e^{-\frac{bx^2}{2}}
\delta_{-\gamma(s)}(dx)ds 
\nonumber \\ 
&&+\frac{\sigma^2}{8}\int_{E} G\partial_x F\,(2p-1) \mathbbm{I}_{\{\beta(s)>-\gamma(s)\}}
|x+\gamma(s)|^{\delta-1}e^{-\frac{bx^2}{2}}\delta_{\beta(s)}(dx)ds.
\end{eqnarray}\\   
Obviously, ($\ref{id1}$) extends to $G\in \V_b$. By Proposition $\ref{smoo}$, the measure 
$$
\mathbbm{I}_{\overline{E}_M}(s,x)\frac{\sigma^2}{8}|x+\gamma(s)|^{\delta-1}e^{-\frac{bx^2}{2}}
\delta_{\kappa(s)}(dx)ds,\ M\ge 1, \ \delta\ge 1,
$$ 
$\kappa=-\gamma, \beta$, is smooth w.r.t. $(\A,\V)$.
Let $\ell_t^{\kappa}$ denote the unique positive continuous additive functional (PCAF) of 
$\overline{Y}$ associated to $\frac{\sigma^2}{8}|x+\gamma(s)|^{\delta-1}e^{-\frac{bx^2}{2}}\delta_{\kappa(s)}(dx)ds$ 
(see Theorem 2.2 in \cite{Tr4}). 
Then $\int_0^t G(\overline{Y}_s)d\ell_s^{\kappa}$ is associated to 
$G(s,x)\frac{\sigma^2}{8}|x+\gamma(s)|^{\delta-1}e^{-\frac{bx^2}{2}}\delta_{\kappa(s)}(dx)ds$ 
for any $G\in \B_b(E)$. In particular,  $\ell_t^{-\gamma}$ vanishes, if $\delta\not=1$.
We obtain 
\begin{eqnarray}\label{id2}
N^{[F]}_t&=&\int_0^t \left (\frac{\sigma^2}{8}\left (\partial_{xx}F+
\left (\frac{\delta-1}{H+\gamma\circ J}-bH\right )\partial_x F\right )+
\partial_t F \right )(\overline{Y}_s)ds \nonumber \\ 
&&+(2p-1)\int_0^t \partial_x F\, \mathbbm{I}_{\{\beta\circ J>-\gamma\circ J\}}(\overline{Y}_s)d\ell_s^{\beta}
\nonumber \\ 
&&+\mathbbm{I}_{\{\delta= 1\}}\int_0^t \partial_x F
\left \{ (1-p)\mathbbm{I}_{\{\beta\circ J>-\gamma\circ J\}} 
+p\mathbbm{I}_{\{\beta\circ J=-\gamma\circ J\}}\right \}(\overline{Y}_s)d\ell_s^{-\gamma}.
\end{eqnarray}\\
Indeed, if we denote the r.h.s. of ($\ref{id2}$) by $A_t$ then in particular by ($\ref{id1}$)
$-\e(F,\widehat G_1 W)=\lim_{\alpha\to\infty} \alpha^2 E_{\widehat G_1 W \rho dy}
\left [\int_{0}^{\infty}e^{-\alpha\,t}A_tdt\right ]$ for all $W\in \H_b$. Hence 
$N^{[F]}_t=A_t$ by Theorem 2.3 in \cite{Tr4}. On the other hand, 
by Lemma 2.1(i) in \cite{Tr4} the 
Revuz measure $\mu_{\langle M^{[F]}\rangle}$ 
is equal to $\frac{\sigma^2}{4}(\partial_x F)^2\rho dy$. A simple calculation shows that the Revuz 
measure of $\frac{\sigma^2}{4}\int_0^{t} (\partial_x F)^2(\overline{Y}_s)ds$ 
is also equal to $\frac{\sigma^2}{4}(\partial_x F)^2\rho dy$. 
Consequently, we have $\langle M^{[F]}\rangle_t=\frac{\sigma^2}{4}\int_0^{t} (\partial_x F)^2(\overline{Y}_s)ds$ (see 
\cite[\mbox{Remark 5.2(ii)}]{Tr1}) and therefore we may assume that
\begin{eqnarray}\label{id3}
M^{[F]}_t & = & \frac{\sigma}{2}\int_0^t \partial_x F(\overline{Y}_s)dB_s
\end{eqnarray}
with $((B_t)_{t\ge 0}, P_y, (\F_t)_{t\ge 0})$ being a Brownian motion starting at zero for 
strictly $\e$-q.e. $y\in E$.\\
By ($\ref{id1}$), ($\ref{id2}$), applied to $J_M$, letting $M\to \infty$, we obtain
$$
J(\overline{Y}_t)=J(\overline{Y}_0)+t.
$$
We put 
$$
X_t:=H(\overline{Y}_t), \ t\ge 0,
$$ 
so that
$$
\overline{Y}_t=(J(\overline{Y}_0)+t, X_t).
$$\\
Applying again ($\ref{id1}$), ($\ref{id2}$), but this time  
to $H_M$, letting $M\to \infty$, we obtain 
\begin{eqnarray}\label{id4}
X_t &=&  X_0+\frac{\sigma}{2}B_t+\frac{\sigma^2}{8}\int_0^t\frac{\delta-1}{X_s+\gamma(J(\overline{Y}_s))}-bX_s ds
+(2p-1)\int_0^t\mathbbm{I}_{\{\beta\circ J>-\gamma\circ J\}}(\overline{Y}_s)d\ell_s^{\beta} \nonumber \\
&&+\mathbbm{I}_{\{\delta= 1\}}\int_0^t
\left \{(1-p)\mathbbm{I}_{\{\beta\circ J>-\gamma\circ J\}}
+p\mathbbm{I}_{\{\beta\circ J=-\gamma\circ J\}}\right \}(\overline{Y}_s)d\ell_s^{-\gamma}.\\ \nonumber 
\end{eqnarray}
(\ref{id4}) holds $P_y$-a.s for $\e$-q.e. $y\in E$.\bigskip
\subsubsection{Identification of the process $\overline{Y}$ associated to the Generalized Dirichlet form involving the semimartingale local time}\label{GDF3}
\bigskip\medskip
The positive increasing processes $\ell^{\beta}, \ell^{-\gamma}$ are so-called Markovian local times because they are PCAF's of a strong Markov process (namely $\overline{Y}$) with 
related $1$-potentials and associated smooth measures that are singular (cf. (\ref{smoo1}), Proposition \ref{smoo}, and the Revuz formula (\ref{I3})).
It is a priori not clear whether these Markovian local times are semimartingale local times, i.e. increasing processes that occur in Tanaka's formula. There are Markov processes that are 
semimartigales and where the Markovian local times and the semimartingale local times are different to each other (see e.g. \cite{ay}) but this will not be the case in our situation (see Remark \ref{constloc}(ii)). 
\\ \\
\begin{rem}\label{constloc}
(i) If $\delta>1$, then 
$(x+\gamma(s))^{-1}\rho(s,x)dxds$ is a smooth measure,  
since $(x+\gamma(s))^{-1}\in L^1_{loc}(E,\rho dy)$, and therefore  $X$ is a semimartingale. 
If $\delta=1$, $\int_0^t\frac{\delta-1}{X_s+\gamma(J(\overline{Y}_s))}ds$ disappears, and $X$ is 
again a semimartingale.\\
If we had admitted $\delta<1$, then clearly $(x+\gamma(s))^{-1}\notin L^1_{loc}(E,\rho dy)$, 
hence $X$ would not be a semimartingale, and we would have to work with principal 
values in (\ref{id4}). Further Proposition \ref{smoo} wouldn't apply. Nonetheless, it is clearly 
possible to consider the case $\delta<1$ (see Remark \ref{extension} below).\\
Finally, since all subsequent transformations applied to $X$ ($\delta\ge 1$!) are keeping 
the class of semimartingales invariant, the processes $Y,Z,$ constructed below, and renamed as 
$R,\sqrt{R}$, in the introduction, will remain semimartingales.\\ 
(ii) In our case it will turn out that if $\kappa$ is regular enough, e.g. $\kappa\in H^{1,2}_{loc}(\R^+)$, 
then $\ell_t^{\kappa}$ (restricted to its support) is a constant multiple of a 
classical semimartingale local time (see e.g. \cite[\mbox{chapter VI}]{RYor} for a rigorous definition of semimartingale local time).
We will determine these constants for $\kappa=\beta$, $\kappa=-\gamma$, but 
we remark that everything would have worked exactly in the same way up to (\ref{id4}) 
if we only assumed $\beta$, $-\gamma$, to be measurable and decreasing. The later Girsanov transformation of section \ref{GIR}
for which only $\gamma\in  H^{1,2}_{loc}(\R^+)$ is needed, can also in this case be applied to (\ref{id4})  (see Remark \ref{addcalc}(ii)).\\ \\ 
\end{rem}
In order to determine the constants mentioned in Remark \ref{constloc}(ii), 
we will evaluate the l.h.s. of Tanaka's formula (\ref{tanaka}) below with the help of (\ref{I5}) and then compare the result with the r.h.s. of (\ref{tanaka}) (see (\ref{fuku}), (\ref{loc2})). 
We consider the point-symmetric derivative 
\[sgn(x):= \left\{ \begin{array}{r@{\quad}l}
 1&\mbox{if }\ x>0 \\ 
 0&\mbox{if }\ x=0\\ 
 -1&\mbox{if }\ x<0, \end{array} \right. \] \\ 
the left continuous derivative 
\[\overline{sgn}(x):= \left\{ \begin{array}{r@{\quad}l}
 1&\mbox{if }\ x>0 \\  
 -1&\mbox{if }\ x\le 0,   \end{array} \right.\\ \] \\ 
and the right continuous derivative
\[\underline{sgn}(x):= \left\{ \begin{array}{r@{\quad}l}
 1&\mbox{if }\ x\ge0 \\  
 -1&\mbox{if }\ x< 0,   \end{array} \right.\\ \] \\
of $|x|$. Let $\kappa$ be continuous, and locally of bounded variation. Since $X-\kappa$ is 
a continuous semimartingale, we may apply Tanaka's formula 
\begin{eqnarray}\label{tanaka}
|X_t-\kappa(t)|=|X_0-\kappa(0)|+\int_0^t f(X_s-\kappa(s))dX_s+\ell_t^{0f}(X-\kappa),
\end{eqnarray}
(cf. e.g. \cite[\mbox{VI.(1.2) Theorem, (1.25) Exercise}]{RYor}), where
\[\ell_t^{0f}(X-\kappa)= \left\{ \begin{array}{r@{\quad}l}
 \ \ell_t^{0}(X-\kappa)  &\mbox{if }\ f=sgn \\ 
 \ell_t^{0+}(X-\kappa) &\mbox{if }\ f=\overline{sgn}\\ 
 \ell_t^{0-}(X-\kappa) &\mbox{if }\ f=\underline{sgn}, \end{array} \right. \] \\ 
and $\ell_t^{0}(X-\kappa)$ (resp. $\ell_t^{0+}(X-\kappa)$, $\ell_t^{0-}(X-\kappa))$, is called the
{\it symmetric local time} (resp. {\it upper local time}, {\it lower local time}) in zero of the continuous 
semimartingale $X-\kappa$. In the book \cite{RYor} they authors decided to work with the left 
continuous derivative of $x\mapsto |x|$, and they use the expression $L_t^0$ for our $\ell_t^{0+}$.
For our framework it is more intuitive to work with the point symmetric derivative 
(see Remark \ref{whysym} below). \\
By \cite[\mbox{VI.(1.9) Corollary, (1.25) Exercise}]{RYor} 
we have $P_y$-a.s. for $\e$-q.e. $y\in E$
$$
\ell_t^{0+}(X-\kappa)=\lim_{\varepsilon\downarrow 0}
\frac{1}{\varepsilon}\int_0^t\mathbbm{I}_{[0,\varepsilon)}(X_s-\kappa(s))d\langle X,X\rangle_s,
$$
$$
\ell_t^{0-}(X-\kappa)=\lim_{\varepsilon\downarrow 0}
\frac{1}{\varepsilon}\int_0^t\mathbbm{I}_{(\varepsilon,0]}(X_s-\kappa(s))d\langle X,X\rangle_s,
$$
and
$$
\ell_t^{0}(X-\kappa)=\frac{\ell_t^{0+}(X-\kappa)+\ell_t^{0-}(X-\kappa)}{2}.
$$\\
From \cite{Tr6} we know that $\{s\ge 0|X_s-\kappa(s)=0\}$ is 
$P_y$-a.s. of Lebesgue measure zero for $\e$-q.e. $y\in E$.\\ \\ 
Let $(F_M)_{M\ge 1}\subset {C^2_0(E)}$, such that 
\[ F_M(s,x):= \left\{ \begin{array}{r@{\quad}l}
 1&\mbox{for}\ (s,x)\in \overline{E}_M \\ 
 0&\mbox{for}\ (s,x)\in \overline{E}_{M+1}^c,  \end{array} \right. \] 
$M\ge 1$, and let 
$$
\overline{\kappa}(s,x):=|x-\kappa(s)|, \ \ \kappa\in H^{1,2}_{loc}(\R^+).\\
$$ \\
It is easy to see that $\overline{\kappa} F_M\in \F$ 
for any $M\ge 1$. We will use the same 
localization procedure 
as before. Thus, if  $M_t^{[\overline{\kappa}]}:=\lim_{M\to \infty} M_t^{[\overline{\kappa} F_M]}$, 
and $N_t^{[\overline{\kappa}]}:=
\lim_{M\to \infty} N_t^{[\overline{\kappa} F_M]}$, then $A_t^{[\overline{\kappa}]}=
M_t^{[\overline{\kappa}]}+N_t^{[\overline{\kappa}]}$. 
If $G\in C^1_0(E)$, then
$$
-\e(\overline{\beta} F_M,G)= \frac{\sigma^2}{8}\int_{E} \overline{\beta} \partial_{xx}F_M G\rho dxds
+ \frac{\sigma^2}{8}\int_{E} 2sgn(\overline{\beta})\partial_{x}F_M G\rho dxds \nonumber \\
$$
$$
+ \frac{\sigma^2}{8}\int_{E} \left (\frac{\delta -1}{\overline{\gamma}}-bH \right )
\left (\overline{\beta}\partial_{x}F_M+sgn(\overline{\beta})F_M\right ) G\rho dxds 
$$
$$
-(1-p)\int_E (\overline{\beta}\partial_{x}F_M-F_M) G \mathbbm{I}_{\{\beta(s)>-\gamma(s)\}}
\frac{\sigma^2}{8}|x+\gamma(s)|^{\delta -1}e^{-\frac{bx^2}{2}}\delta_{\beta(s)}(dx)ds 
$$
$$
+p\int_E  (\overline{\beta}\partial_{x}F_M+F_M)G \frac{\sigma^2}{8}|x+\gamma(s)|^{\delta -1}e^{-\frac{bx^2}{2}}\delta_{\beta(s)}(dx)ds 
$$
$$
-(1-p)\int_E (\overline{\beta}\partial_{x}F_M-F_M)sgn(\overline{\beta}) G 
\mathbbm{I}_{\{\beta(s)>-\gamma(s)\}}
\frac{\sigma^2}{8}|x+\gamma(s)|^{\delta -1}e^{-\frac{bx^2}{2}}\delta_{-\gamma(s)}(dx)ds 
$$
$$
+\int_E \left ( |x-\beta(s)|\partial_t F_M -F_M sgn\left ( x-\beta(s)\right ) \beta'(s) \right ) G\rho dy.
$$
Obviously, the last equation extends to $G\in \V_b$. Thus, letting $M\to\infty$, 
\begin{eqnarray}
N_t^{[\overline{\beta}]}&= &\frac{\sigma^2}{8}\int_{0}^{t} 
\left (\frac{\delta -1}{H+\gamma\circ J}-bH \right )sgn(\overline{\beta})(\overline{Y}_s)ds-
\int_{0}^{t}sgn(\overline{\beta})(\overline{Y}_s)d\beta(J(\overline{Y}_s)) \nonumber\\
&&+(1-p)\int_0^t\mathbbm{I}_{\{\beta\circ J>-\gamma\circ J\}}(\overline{Y}_s)d\ell_s^{\beta}
+p\ell_t^{\beta}\nonumber\\\
&&+(1-p)\int_0^t sgn(\overline{\beta})(\overline{Y}_s)\mathbbm{I}_{\{\beta\circ J>-\gamma\circ J\}}(\overline{Y}_s)d\ell_s^{-\gamma}.
\end{eqnarray} 
On the other hand by Lemma 2.1(i) in \cite{Tr4}
\begin{eqnarray*}
&&\hspace*{+2cm}\mu_{\langle M^{[\overline{\beta}F_M]}\rangle} = 
\frac{\sigma^2}{4}\partial_x\left ( |x-\beta(s)| F_M \right )^2\rho dy\\
&=&\frac{\sigma^2}{4}\left (\left ( |x-\beta(s)|\partial_x F_M \right )^2+
2|x-\beta(s)|\partial_x F_M sgn\left ( x-\beta(s) \right ) F_M \right )\rho dy\\
&& \hspace*{+2cm}+ \frac{\sigma^2}{4}\left (F_M sgn\left ( x-\beta(s) \right )\right )^2\rho dy.
\end{eqnarray*} 
We obtain $\langle M^{[\overline{\beta}]}\rangle_t=\frac{\sigma^2}{4}\int_{0}^{t}sgn(\overline{\beta})(\overline{Y}_s)^2ds$. 
Consequently, we may assume that
$$
M^{[\overline{\beta}]}_t=\frac{\sigma}{2}\int_{0}^{t}sgn(\overline{\beta})(\overline{Y}_s)dB_s.
$$ 
Note that $\int_{0}^{t}sgn(\overline{\beta})(\overline{Y}_s)d\ell_s^{\beta}=0$, because for its associated signed 
smooth measure we have $sgn(x-\beta(s))\delta_{\beta(s)}(dx)ds=0$, since $sgn(0)=0$. Therefore
\begin{eqnarray}\label{fuku}
|X_t-\beta(J(\overline{Y}_t))| & = & |X_0-\beta(J(\overline{Y}_0))|+
\int_{0}^{t}sgn(X_s-\beta(J(\overline{Y}_s)))d(X_s-\beta(J(\overline{Y}_s)))\nonumber \\
&&+p\ell_t^{\beta}+
(1-p)\int_0^t\mathbbm{I}_{\{\beta\circ J>-\gamma\circ J\}}(\overline{Y}_s)d\ell_s^{\beta}.\\ \nonumber
\end{eqnarray} 
For $\kappa:\R^+\to\R$, and $u\in \R^+$, define 
$$
\kappa_u(t):=\kappa(u+t).
$$
Recall that 
$$
P_y(J(\overline{Y}_t)=J(y)+t)=1,
$$
so that 
$$
\kappa_{J(y)}(t)=\kappa(J(\overline{Y}_t))\ \ P_y\mbox{-a.s.} 
$$\\
Comparing (\ref{fuku}) with Tanaka's formula (\ref{tanaka}) we see that 
\begin{eqnarray}\label{loc2}
\ell_t^{0}(X-\beta_{J(y)})=p\ell_t^{\beta}+
(1-p)\int_0^t\mathbbm{I}_{\{\beta_{J(y)}(s)>-\gamma_{J(y)}(s)\}}d\ell_s^{\beta}
\hspace*{+1cm}P_y\mbox{-a.s}
\end{eqnarray} 
for $\e$-q.e. $y\in E$.
In a similar way one can see that we have 
\begin{eqnarray}\label{loc3}
\ell_t^{0}(X+\gamma_{J(y)})
& = & 
\frac12\ell_t^{0+}(X+\gamma_{J(y)})\nonumber \\
& = & \int_0^t p\mathbbm{I}_{\{\beta_{J(y)}(s)=-\gamma_{J(y)}(s)\}}+
(1-p)\mathbbm{I}_{\{\beta_{J(y)}(s)>-\gamma_{J(y)}(s)\}}d\ell_s^{-\gamma},
\end{eqnarray} 
$P_y$-a.s. for $\e$-q.e. $y\in E$. Therefore, (\ref{id4}) rewrites $P_y$-a.s. as 
\begin{eqnarray}\label{rootCIR} 
X_t & = & X_0+\frac{\sigma}{2}B_t+\frac{\sigma^2}{8}\int_0^t\frac{\delta-1}{X_s+\gamma_{J(y)}(s)}-bX_s ds
\nonumber\\ 
&&+(2p-1)\int_0^t\mathbbm{I}_{\{\beta_{J(y)}(s)>-\gamma_{J(y)}(s)\}}d\ell_s^0(X-\beta_{J(y)}) 
+\frac{\mathbbm{I}_{\{\delta= 1\}}}{2}\ell_t^{0+}(X+\gamma_{J(y)}).\\ \nonumber 
\end{eqnarray} 
\subsection{Girsanov transformation of (\ref{transform}) and concluding remarks}\label{GIR}
\bigskip\medskip
Let $b\in \R^+$. We define 
$$
W_t:=B_t+\frac{1}{4\sigma }\int_0^t 8\gamma'_{J(\overline{Y}_0)}(s)+\sigma^2 b\gamma_{J(\overline{Y}_0)}(s)ds,
$$ 
and 
$$
dQ_y=e^{-\frac{1}{4\sigma }\int_0^t 8\gamma'_{J(y)}(s)+\sigma^2 b\gamma_{J(y)}(s)dB_s-
\frac{1}{32\sigma^2}\int_0^t |8\gamma'_{J(y)}(s)+\sigma^2 b\gamma_{J(y)}(s)|^2 ds} dP_y 
\mbox{ on } \F_t.
$$ 
Obviously, Novikov's condition is satisfied since $\gamma\in H^{1,2}_{loc}(\R^+)$, hence 
$B_t$ is a Brownian motion under the equivalent measure $Q_y$. 
Put 
$$
Y_t:=X_t+\gamma_{J(Y_0)}(t).  
$$
Then
$$
\ell_t^0(X-\beta_{J(y)})=\ell_t^0(X+\gamma_{J(y)}-(\beta_{J(y)}+\gamma_{J(y)}))=
\ell_t^0(Y-\lambda_{J(y)})\ \ \ \ Q_y\mbox{-a.s},
$$ 
since $Q_y$ is equivalent to $P_y$. Analogously, $\ell_t^{0+}(X+\gamma_{J(y)})=\ell_t^{0+}(Y)$ holds 
$Q_y$-a.s.  
Thus, under $Q_y$, $Y_t\ge 0$ (since $X_t\ge -\gamma(t)$), and
\begin{eqnarray}\label{CIRroot} 
Y_t & = & Y_0+\frac{\sigma}{2}W_t
+\frac{\sigma^2}{8}\int_0^t\frac{\delta-1}{Y_s}-bY_s ds\nonumber \\
&+& (2p-1)\int_0^t\mathbbm{I}_{\{\lambda_{J(y)}(s)>0\}}d\ell_s^0(Y-\lambda_{J(y)})
\ + \ \frac{\mathbbm{I}_{\{\delta= 1\}}}{2}\ell_t^{0+}(Y), 
\end{eqnarray}
where $Q_y(Y_0=H(y)+\gamma(J(y)))=1$.
Moreover, under $Q_y$, $Z_t:=Y_t^2$, satisfies
\begin{eqnarray}\label{CIR} 
Z_t &=& Z_0+\sigma\int_0^t\sqrt{Z_s}dW_s+\frac{\sigma^2}{4}\int_0^t(\delta-bZ_s) ds\nonumber \\ 
&&+(2p-1)\int_0^t\mathbbm{I}_{\{\lambda_{J(y)}(s)>0\}}
2\sqrt{Z_s}\,d\ell_s^0(\sqrt{Z}-\lambda_{J(y)}), \\ \nonumber
\end{eqnarray}
and $Z_0=(H(y)+\gamma(J(y)))^2$ $Q_y$-a.s.\\
\begin{rem}\label{bemerk}
Setting $b=0$ , and replacing $\sigma^2 b\gamma_{J(y)}(s)$ by $\sigma^2 c$, $c\in \R^+$, 
in the expression for $Q_y$, 
we construct instead of (\ref{CIR}) a positive solution $Z=Y^2$ to 
\begin{eqnarray}\label{DSR} 
Z_t & = & Z_0+\sigma\int_0^t\sqrt{Z_s}dW_s+\frac{\sigma^2}{4}\int_0^t(\delta-c\sqrt{Z_s}) ds\nonumber \\ 
&&+(2p-1)\int_0^t\mathbbm{I}_{\{\lambda_{J(y)}(s)>0\}}
2\sqrt{Z_s}\,d\ell_s^0(\sqrt{Z}-\lambda_{J(y)}). \\ \nonumber 
\end{eqnarray}
For $p=\frac12$, (\ref{DSR}) is well-known as the double square-root (DSR) model of Longstaff 
in financial mathematics. For $p=\frac12$, (\ref{CIR}) is the well-known Cox-Ingersoll-Ross model (CIR).
\end{rem}
\begin{rem}\label{quasi}
The equations (\ref{rootCIR}), (\ref{CIR}), (\ref{DSR}), are in the sense of equivalence of additive 
functionals. This means that they hold for initial conditions outside some exceptional set. If $\lambda^2$ 
is constant, say $\lambda^2=c>0$, we are in the symmetric case, and it is clear that the parabolic capacity is comparable with 
the elliptic one, so that we can 
start from every $X_0,Y_0,Z_0=x\ge 0$, if $\delta<2$, and for every $X_0,Y_0,Z_0=x>0$, if $\delta\ge 2$. 
Indeed $Y$ is then associated to the symmetric (!) Dirichlet form (cf. \cite{fot}), which is uniquely determined as the closure of
$$
{\cal E}^p(f,g):=\int_0^{\infty}\frac{\sigma^2}{2}xf'(x)g'(x)x^{\frac{\delta}{2}-1}e^{-\frac{bx}{2}}\rho(x)dx;\ \ 
f,g\in C_0^{\infty}([0,\infty))
$$
in $L^2([0,\infty),x^{\frac{\delta}{2}}e^{-\frac{bx}{2}}\rho(x)dx)$, 
where $\rho(x)=(1-p)\mathbbm{I}_{\{x<c\}}+p\mathbbm{I}_{\{x\ge c\}}$, and it is clear that the capacities are all 
equivalent for $p\in (0,1)$. $p=\frac12$ corresponds to case without skew reflection.\\
The regularity of the equations (\ref{rootCIR}), (\ref{CIR}), (\ref{DSR}), i.e. the question whether we can start pointwise in the parabolic case, i.e. if $\lambda^2\not=const$ is subject of forthcoming work. 
Note however, that the structure of 
(\ref{rootCIR}), (\ref{CIR}), (\ref{DSR}), is not influenced by these questions of regularity. 
\end{rem}

\begin{rem}\label{whysym}
(i) Rewriting (\ref{fuku}) with $\overline{sgn}, \underline{sgn}$, one can easily see that 
 $\ell_t^{0+}(X-\beta)=p\ell_t^{\beta}$, and 
$\ell_t^{0-}(X-\beta)=(1-p)\int_0^t\mathbbm{I}_{\{\beta(s)>-\gamma(s)\}}d\ell_s^{\beta}$.
This implies the following relations for $\ell^0_t(R-\lambda^2)$ in (\ref{skewbesq}):
\begin{eqnarray}\label{relations}
\ell^0_t(R-\lambda^2)=\frac{1}{2p}\ell^{0+}_t(R-\lambda^2)=\frac{1}{2(1-p)} \ell^{0-}_t(R-\lambda^2).
\end{eqnarray}
Note that a stronger version of (\ref{relations}) holding for more general $\lambda^2$  is obtained in \cite{Tr6} by purely probabilistic methods.
One observes immediately the discontinuity of the local times in the space variable, thus we provide another example of diffusion 
with discontinuous local time (see e.g. \cite{walsh}). Moreover, if $|p|>1$, then any of these local times is identically zero. 
Consequently, the associated process is the CIR process, for which uniqueness in any sense is known to hold. 
Consider the time dependent Dirichlet form $\e$ (cf e.g. \cite{o2005}, \cite{Tr4}) corresponding to the time dependent CIR process $(t,R_t)$, then 
$$
{\cal E}(F,G):=\int_0^{\infty}\int_0^{\infty}
\frac{\sigma^2}{2}|x|\partial_x F(t,x)\partial_x G(t,x)|x|^{\frac{\delta}{2}-1}
e^{-\frac{bx}{2}}dxdt
$$
$$
- \int_0^{\infty}\int_0^{\infty}\partial_t F(t,x) G(t,x)|x|^{\frac{\delta}{2}-1}
e^{-\frac{bx}{2}}dxdt.\ \ 
$$
One can easily see that the local time $\ell^0(R-\lambda^2)$ is uniquely associated to the measure 
\begin{eqnarray*}
\frac{\sigma^2}{2}|x|^{\frac{\delta}{2}}e^{-\frac{bx}{2}}\delta_{\lambda^2(t)}(dx)dt.
\end{eqnarray*} 
But this measure doesn't vanish if $\lambda^2$ is different from zero on a set of 
positive Lebesgue measure. Therefore $\ell^0(R-\lambda^2)$ cannot vanish identically, a contradiction. Thus an associated process cannot exist if $|p|>1$.\\
(ii) The first reason to employ point symmetric derivatives, 
is that these better work out the intuitive structure of skew 
reflection, namely $2p-1=p-(1-p)$, so upper reflection 
with probability  $p$, and lower reflection with probability $1-p$. 
At least in the case ($\delta=1, \sigma=2, b=0, \lambda^2\equiv 0$) the just mentioned intuitive structure is rigorously described for the square root process (the skew BM) through excursion theory 
(see e.g. \cite{hs}, \cite{walsh}). The second reason is that symmetric local times correspond to symmetric derivatives, which are used in distribution theory, and therefore correspond to our analytic construction of the Markov process generator.    \\
\end{rem}
\begin{rem}\label{addcalc}
(i) The relation (\ref{relloc}) can easily be derived by writing down Fukushima's extended decomposition 
(\ref{I5}) in localized form for $|Z_t-\lambda^2(t)|=|(X+\gamma(t))^2-\lambda^2(t)|$ 
($Z$ as in (\ref{CIR})) and then comparing it with the symmetric Tanaka formula. More precisely, we obtain
$$
2\sqrt{Z_s}\,d\ell_s^0(\sqrt{Z}-\lambda_{J(y)})=d\ell_s^0(Z-\lambda^2_{J(y)})
$$
by doing so.\\ 
(ii) Coming back to Remark \ref{constloc}(ii), suppose that we had assumed $\lambda=\beta+\gamma$, where only 
$\gamma\in H^{1,2}_{loc}(\R^+)$, and $\beta$ not necessarily continuous, but decreasing. Then, we 
would have obtained 
(\ref{CIRroot}), (\ref{CIR}), except that $\ell^0(\sqrt{Z}-\lambda_{J(y)})$ has to be replaced by 
$\ell^{\lambda}$, where $\ell^{\lambda}$ is a positive continuous additive functional of $Y$, which only grows when $Y=\lambda$, or equivalently $Z=\lambda^2$.
\end{rem}

\begin{rem}[The case $\delta\in (0,1)$]\label{extension}
Since we are no longer in the semimartingale 
case we can no longer make use of the Girsanov formula with $\gamma$. Therefore one puts $\gamma\equiv 0$.
Then one may still start as before with (\ref{DF}) and the following density 
$$
\rho(t,x)=\left ((1-p) \mathbbm{I}_{[0,\lambda(t))}(x)+
p \mathbbm{I}_{[\lambda(t),\infty)}(x)\right )
|x|^{\delta-1}e^{-\frac{bx^2}{2}}.
$$ 
Note that $\rho(\cdot,x)$ is still assumed to be increasing in $t$, and that Proposition 
\ref{smoo} could no longer be available.\\
One may preferably directly start with the squared process since the technique of changing the measure 
will not be used. Thus one could proceed with the following (cf. (\ref{form})) time dependent form
\begin{eqnarray*}
\e(F,G) & = & \int_{0}^{\infty}\int_{0}^{\infty}\frac{\sigma^2}{2}|x| 
\partial_x F(s,x) \,\partial_x G(s,x) \rho(s,x) dxds\nonumber  \\
& & \hspace*{+2cm}-\int_{0}^{\infty}\int_{0}^{\infty}\partial_t F (s,x) G(s,x)\rho(s,x)dxds, 
\end{eqnarray*}
with 
$$
\rho(t,x)=\left ((1-p) \mathbbm{I}_{[0,\lambda^2(t))}(x)+
p \mathbbm{I}_{[\lambda^2(t),\infty)}(x)\right )
|x|^{\frac{\delta}{2}-1}e^{-\frac{bx}{2}},
$$ 
and where $p\in (0,1)$, and $\lambda^2\in H^{1,1}_{loc}(\R^+)$, are chosen, such that $\rho$ is increasing in $t$.
In order to convince the reader, we just line out the argument for the existence of the corresponding local time on $\lambda^2$. In fact one only has to show that 
$$
\frac{\sigma^2}{2}|x|^{\frac{\delta}{2}}e^{-\frac{bx}{2}}\delta_{\lambda^2(t)}(dx)dt
$$
is smooth with respect to the symmetric part of $\e$. This can be done analogously to 
Proposition \ref{smoo}. Of course, if $\lambda^2$ is a constant, we consider the 
 symmetric Dirichlet form of Remark \ref{quasi}.

\end{rem}


\begin{thebibliography}{XXX}

\bibitem{ay} Temps locaux, Ast\'erisque 52-52, Soci\'ete math\'ematique de France, 1978.
\bibitem{bcs}Burdzy, K.; Chen, Z.-Q.; Sylvester, J.: The heat equation and reflected Brownian motion in time-dependent domains. Ann. Probab. 32 (2004), no. 1B, 775--804.
\bibitem{fot} Fukushima, M.,Oshima, Y., Takeda, M.: Dirichlet forms and 
Symmetric Markov processes. Berlin-New York: Walter de Gruyter 1994.

\bibitem{ek} Ethier, S. N.; Kurtz, T. G. Markov processes. Characterization and convergence. Wiley Series in Probability and Mathematical Statistics: Probability and Mathematical Statistics. John 

\bibitem{lg} Le Gall, J.-F. One-dimensional stochastic differential equations involving the local times of the unknown process. Stochastic analysis and applications (Swansea, 1983), 51--82, Lecture Notes in Math., 1095, Springer, Berlin, 1984.

\bibitem{hs} Harrison, J.M., Shepp, L.A.: On skew Brownian motion, Ann. Prob., Vol. {\bf 9}, No.2, 309--313, (1981).

\bibitem{o2005} Oshima, Yoichi Time-dependent Dirichlet forms and related stochastic calculus. Infin. Dimens. Anal. Quantum Probab. Relat. Top. 7 (2004), no. 2, 281--316.

\bibitem{ouk} Ouknine, Y.:
Temps local du produit et du sup de deux semimartingales, S\'eminaire de Probabilit\'es, XXIV, 1988/89, 477--479, Lecture Notes in Math., 1426, Springer, Berlin, 1990. 

\bibitem{RYor} Revuz, D., Yor, M.: Continuous martingales and Brownian motion, Springer Verlag, (1999).


\bibitem{St1} Stannat, W.:  The theory of generalized Dirichlet forms and its 
applications in analysis and stochastics, Mem. Amer.Math. Soc., {\bf 142} (1999), no. 678.

\bibitem{Tr1} Trutnau, G.: Stochastic calculus of generalized Dirichlet 
forms and applications to
stochastic differential equations in infinite dimensions, 
Osaka J. Math. 37 (2000), 13-41. 

\bibitem{Tr2} Trutnau, G.: On a class of non-symmetric diffusions containing fully nonsymmetric distorted Brownian motions.  Forum Math.  15  (2003),  no. 3, 409--437.

\bibitem{Tr3} Trutnau, G.: On Hunt processes and strict capacities associated with generalized Dirichlet forms.  Infin. Dimens. Anal. Quantum Probab. Relat. Top.  8  (2005),  no. 3, 357--382.

\bibitem{Tr4} Russo, F., Trutnau, G.: About a construction and some analysis of time 
inhomogeneous diffusions on monotonely moving domains.  J. Funct. Anal.  221  (2005),  
no. 1, 37--82. 

\bibitem{Tr6} Trutnau, G: Pathwise uniqueness of the squared Bessel process, and CIR process, with skew reflection on a deterministic time dependent curve, arXiv:0804.0123.

\bibitem{Wein} Weinryb, S.: Etude d'une \'equation diff\'erentielle stochastique avec temps local, 
S\'eminaire de probabilit\'es XVII,  72--77, Lecture Notes in Math., 986, Springer, Berlin, 1983.

\bibitem{walsh} Walsh, J.B.: A diffusion with discontinuous local time, Ast\'erisque 52-53, p. 37-45, (1978).

\bibitem{Yan} Yan, Jia An: Some formulas for the local time of semimartingales, Chinese Ann. Math. 1 (1980), no. 3-4, 545--551. 

\end{thebibliography}
\end{document}